\documentclass{article}
\usepackage[utf8]{inputenc}
\usepackage{authblk}
\usepackage{graphicx}
\usepackage{amssymb}
\usepackage{amsfonts,amsthm,mathtools,verbatim,xcolor,verbatim,mathdots, mathrsfs}
\usepackage{stmaryrd}
\usepackage{tikz}
\usetikzlibrary{arrows.meta}
\usepackage{todonotes}

\usepackage[shortlabels]{enumitem}

\newtheorem{theorem}{Theorem}[section]

\newtheorem{lemma}[theorem]{Lemma}
\newtheorem{corollary}[theorem]{Corollary}

\theoremstyle{definition}

\newtheorem{definition}[theorem]{Definition}

\newtheorem{remark}[theorem]{Remark}

\newtheorem{question}[theorem]{Question}

\theoremstyle{remark}

\newtheorem{claim}{Claim}[theorem]
\newtheorem*{proofofclaim}{Proof of Claim}

\newcommand{\rca}{\mathrm{RCA}_0}
\newcommand{\rcas}{\mathrm{RCA}_0^*}
\newcommand{\wkl}{\mathrm{WKL}_0}
\newcommand{\wkls}{\mathrm{WKL}_0^*}

\newcommand{\cac}{\mathrm{CAC}}

\newcommand{\ads}{\mathrm{ADS}}

\newcommand{\EA}{{\ind\Delta_0}+{\exp}}

\newcommand{\srttt}{\mathrm{SRT}^2_2}

\newcommand{\rttt}{\mathrm{RT}^2_2}

\newcommand{\crttt}{\mathrm{CRT}^2_2}

\newcommand{\coh}{\mathrm{COH}}

\DeclareMathOperator{\Cond}{Cond}
\DeclareMathOperator{\Name}{Name}

\newcommand{\defd}{\mathclose\downarrow}
\newcommand{\condle}{\trianglelefteqslant}

\newcommand{\imp}{\rightarrow}

\newcommand{\cod}{\mathrm{Cod}}

\newcommand{\I}{\mathrm{I}}

\newcommand{\hyp}{\text{-}}

\newcommand{\fa}[1]{\forall{#1}\ }

\newcommand{\restdto}[1]{\mathclose{\restriction_{#1}}}

\newcommand{\restd}{\mathclose{\upharpoonright}}

\newcommand{\fain}[2]{\fa{{#1}\in{#2}}}

\newcommand{\IN}{\mathbb{N}}

\newcommand{\ee}{\mathrm{e}}

\renewcommand{\exp}{\mathrm{exp}}
\newcommand{\ind}{\mathrm{I}}
\newcommand{\bd}{\mathrm{B}}

\newcommand{\PA}{\mathrm{PA}}

\newcommand{\WKL}{\mathrm{WKL}}

\newcommand{\RCA}{\mathrm{RCA}}

\newcommand{\SRT}{\mathrm{SRT}}

\newcommand{\SC}{\mathrm{SC}}

\newcommand{\LPC}{\mathrm{LPC}}
\newcommand{\II}{\mathbb{I}}

\newcommand{\RT}{\mathrm{RT}}

\newcommand{\fincoh}{\mathrm{fin}\hyp\mathrm{COH}}

\newcommand{\tuple}[1]{\langle#1\rangle}
\newcommand{\Ack}[1]{\mathrm{Ack}(#1)}

\title{
The cohesive and stable Ramsey theorems \\ 
and proof size over a weak base theory
}

\author{Leszek Aleksander Kołodziejczyk\thanks{University of Warsaw, Institute of Mathematics, \texttt{lak@mimuw.edu.pl}}
~and Mengzhou Sun\thanks{University of Warsaw, Institute of Mathematics, \texttt{m.sun3@uw.edu.pl}}}

\date{May 8, 2026}

\begin{document}

\maketitle

\begin{abstract}

    We show that over the weak base theory $\rcas$,
    cohesive Ramsey's theorem for pairs $\crttt$
    implies exponential closure of the definable cut $\I^0_1$, which is the intersection of all $\Sigma^0_1$-definable cuts. Consequences include non-elementary proof speedup of $\rcas + \crttt$ over $\rcas$ for $\Pi_1$ sentences
    and the unprovability of $\crttt$ in $\rcas + \cac$.
    
    On the other hand, we show that $\rcas + \srttt$,
    where $\srttt$ is stable Ramsey's theorem for pairs,
    is polynomially simulated by $\rcas$ w.r.t.~proofs of $\forall \Pi^0_3$ sentences. Nevertheless, $\srttt$
    also implies a nontrivial property of $\I^0_1$, specifically closure under functions of quasipolynomial growth rate.
\end{abstract}

\section{Introduction}

The logical strength of combinatorial theorems
formalized in second-order arithmetic is a topic of considerable interest in reverse mathematics, and in the foundations of mathematics at large. A particularly interesting set of problems concerns the first-order, or more precisely $\Pi^1_1$, consequences 
of Ramsey's theorem for pairs and two colours, $\rttt$,
and of its natural weakenings and variants.
(See \cite{csy:inductive-ramsey, py:rt22, hpy:wo-rt22-conservation, hpy:rt22-pi4-conservation} for some research milestones in this area from the last decade.)

Starting with \cite{yokoyama:on-the-strength},
the strength of combinatorial statements has also been studied over the weak axiom system $\rcas$, which differs from the usual base theory for reverse mathematics, $\rca$, by disallowing uses of mathematical induction for non-computable (more precisely, $\Sigma^0_1$) properties. The study of $\rcas$ has led to new insights into reverse mathematics in the traditional setting of $\rca$ -- see \cite{belanger:coh, fkwy:wkl0star}. However, the strength of $\rttt$ and its cousins over $\rcas$ is also a natural and intriguing topic in its own right.

It was shown in \cite{fkk:weakcousins} that even though typical weakenings of $\rttt$ differ in terms of their first-order consequences over $\rcas$, in some sense  almost all of them have similar strength: they are $\forall\Pi^0_3$- but not $\Pi^1_1$-conservative over $\rcas$,
and thus also over the theory $\bd \Sigma^0_1 + \exp$, which captures the $\Pi^1_1$ consequences of $\rcas$.
(See \cite{sun:fin-coh} for a curious exception.)
That suggests the need for sharper tools to differentiate between the strength of various principles, and one such tool is provided by proof size analysis.

The study of proof size in axiomatic theories has a long tradition dating back to G\"odel \cite{goedel:laenge}. It was 
explicitly related to reverse mathematics by Avigad \cite{avigad:formalizing-forcing}, who introduced the 
method later called forcing interpretations in order to show
that the well-known $\Pi^1_1$-conservativity of $\wkl$
over $\rca$ can be witnessed by a polynomial proof transformation. More recently, proof size analysis has been
applied to the reverse mathematics of Ramsey theory \cite{kwy:ramsey-proof-size, kowalik:cac, hp:poly}.

In the specific area of reverse mathematics over $\rcas$,
it turns out that the effect on proof size is a good way of distinguishing between some Ramsey-theoretic statements 
with the same $\forall \Pi^0_3$ consequences. 
It was shown in \cite{kwy:ramsey-proof-size} that translating proofs of appropriately chosen $\forall \Pi^0_3$ (in fact, simpler) sentences from $\rcas + \rttt$ into $\rcas$ results in iterated exponential (``tower function'') blowup. A similar speedup phenomenon still holds for some weakenings of $\rttt$, like
the Erd\"os-Moser principle $\mathrm{EM}$. On the other hand,
Kowalik \cite{kowalik:cac} proved that the chain-antichain principle $\cac$ leads to no superpolynomial speedup over $\rcas$ for proofs of $\forall \Pi^0_3$ sentences. 

The proofs of both the speedup and the non-speedup results mentioned above make use of the definable cut $\I^0_1$, which is the intersection of all $\Sigma^0_1$-definable cuts and thus measures the available amount of $\Sigma^0_1$ induction. In general, $\I^0_1$ is only closed under multiplication. Adding $\rttt$ to $\rcas$ is enough to show closure of $\I^0_1$ under exponentiation, and that results in proof speedup, while $\cac$ does not imply any such additional closure properties, which plays a major role in the non-speedup argument. The effect of a Ramsey-theoretic principle on $\I^0_1$ depends in turn on the Ramsey numbers associated with its finite-combinatorial analogue. For example, Ramsey numbers for arbitrary 2-colourings of pairs are roughly exponential, and that is 
why $\rttt$ implies exponential closure of $\I^0_1$; but by Dilworth's theorem, Ramsey numbers for colourings corresponding to posets are only polynomial, and because of this $\cac$ does not prove superpolynomial closure of $\I^0_1$.  

A well-known pair of Ramsey-theoretic statements that has so far resisted attempts at such classification consists
of cohesive Ramsey's theorem for pairs, $\crttt$, and stable
Ramsey's theorem for pairs, $\srttt$. The principle $\srttt$ claims the existence of a homogeneous set for any colouring of pairs from $\IN$ that is \emph{stable}, meaning that for any given element, all but finitely many pairs involving that element have the same colour; $\crttt$ complements that by saying that for any colouring of pairs from $\IN$, there is an infinite set on which the colouring is stable. Intuitively, the reason why $\crttt$ and $\srttt$ have been difficult to analyze in terms of proof size is that it is not obvious what the finite analogue of a stable colouring should be. (On the syntactic level, this is reflected in the greater first-order quantifier complexity of $\crttt$ and $\srttt$ compared to many other variants of $\rttt$.)

In this paper, we resolve the conundrum concerning the effect of $\crttt$ and $\srttt$ on proof size over $\rcas$. We show that $\crttt$ behaves like $\rttt$ in that it  implies exponential closure of $\I^0_1$ and thus leads to iterated exponential proof speedup on rather simple sentences.
An interesting corollary of this result is that the implication $\cac \to \crttt$, known to hold over $\rca$, 
requires $\Sigma^0_1$ induction in the sense that it is not probable in $\rcas$. In contrast, $\srttt$ has no superpolynomial proof speedup over $\rcas$ for $\forall \Pi^0_3$ sentences. Unlike $\cac$, however, $\srttt$ does have nontrivial consequences for $\I^0_1$, as it implies the closure of that cut under functions of quasipolynomial growth rate.

The proofs of these facts once again involve finite combinatorics, though, as expected, this time the passage between infinite and finite is less direct than in the case of $\rttt$ and $\cac$. Roughly speaking, a colouring $f$ of pairs from $\{0,\ldots,m-1\}$ that is in a loose sense ``stable'' should have the property that for many $i < m$, the sequence $\tuple{f(i,i+1), f(i,i+2), \ldots, f(i,m-1)}$ contains long constant subsequences, and the related property that the same sequence rarely changes value. The former property is helpful in the analysis of $\crttt$, the latter -- in the analysis of $\srttt$, which is more subtle and requires proving bounds on Ramsey numbers for a parametrized family of classes of colourings.

The rest of this paper is structured as follows. We review the relevant background material in Section \ref{sec:prelim}. We prove the results on the strength of $\crttt$ in Section \ref{sec:crttt}. Passing to $\srttt$, we deal with the finite-combinatorial aspects and the effect on $\I^0_1$ in Section \ref{sec:bounds}, and in Section \ref{sec:forcing} we use that to prove the non-speedup result.

\section{Preliminaries}\label{sec:prelim}

We assume some basic familiarity with fragments of first- and second-order arithmetic, as presented in
\cite{book:HP} and in \cite{book:dzhafarov-mummert, book:simpson}, respectively.
Previous familiarity with the topic of proof size in axiomatic theories is not assumed, though it might make it easier to
follow the technical arguments in Section \ref{sec:forcing}
and in the proof of Corollary \ref{cor:crttt-speedup}. 
See~\cite{pudlak:handbook-lengths-of-proofs} for an excellent classical survey of the topic and the introductory parts of e.g.~\cite{kwy:ramsey-proof-size, kowalik:cac} for a brief but more up-to-date discussion.

Additionally, in the non-speedup proof in 
Section \ref{sec:forcing} we outsource tedious details familiar from earlier work to \cite{kowalik:cac}.
Nevertheless, even the reader who does not have \cite{kowalik:cac} at hand should be able to understand the gist of the proof and the technical aspects of all the novel parts.

The symbol $\IN$ denotes the set of natural numbers formalized in an arithmetic theory, while $\omega$ denotes the set of standard natural numbers. We use the convention that the letter $n$ always denotes a standard number.

The formula classes $\Sigma_n^0, \Pi_n^0$, and $\Delta_0^0$ are defined as usual by counting alternations of blocks of first-order quantifiers, allowing second-order variables but not second-order quantifiers.
Notation without the superscript ${}^0$, that is $\Sigma_n, \Pi_n, \Delta_0$, 
represents analogously defined classes of purely first-order formulas, in which second-order parameters are not allowed.
Given a set $A\subseteq\IN$, we write $\Sigma_n(A), \Pi_n(A)$ for the classes of $\Sigma_n^0$ and $\Pi_n^0$ formulas, respectively, in which $A$ is the only second-order parameter.
The class $\forall \Pi^0_n$ contains formulas that consist of a block of universal (first- and/or second-order) quantifiers
followed by a $\Pi^0_n$ formula.

It is well-known that the graph of exponentiation, viewed as a binary relation $2^x=y$, is $\Delta_0$-definable.
The axiom $\exp$ denotes the $\Pi_2$ statement $\forall x\, \exists y\; 2^x=y$ asserting the totality of exponentiation,
and the formula class $\Delta_0(\exp)$ is defined analogously to $\Delta_0$ but allowing the use of $2^x$ as a function symbol.
We write $2_a(x)$ to denote the $a$-th iteration of exponentiation, i.e., 
\[2_a(x) := 2^{2^{\scriptstyle 2^{\iddots^{\scriptstyle x}}}},\]
where the stack of exponents contains $a$ appearances of $2$.
The expression $2_a$ abbreviates $2_a(1)$. 
Functions like the logarithm $\log y$ and the iterated logarithm $\log^* y$ are treated as integer-valued, with
value equal to the greatest $x\in \IN$ such that $2^x\leq y$ resp.~$2_x\leq y$. As a result, some equalities in the proof of Corollary \ref{cor:srttt-qpoly-lb} hold only approximately. This causes no problems as what really matters for the proof is the \emph{in}equalities, and these hold as soon as the numbers involved are large enough. The function $\omega_1$
is defined by $\omega_1(x) = x^{\log x}$. A~function is said to have \emph{quasipolynomial} growth rate if it is bounded by some finite iteration of $\omega_1$, or equivalently by 
$x \mapsto x^{\log^\ell x}$ for some constant $\ell$.

The theory $\RCA^*_0$, originally introduced in
\cite{simpson-smith}, is obtained from $\RCA_0$ by weakening the $\Sigma^0_1$ induction axiom $\ind \Sigma^0_1$
to ${\ind\Delta^0_0} + {\exp}$. 
The theory $\WKL^*_0$ is obtained from $\WKL_0$ in an analogous way: in other words, $\WKL^*_0$ is $\RCA^*_0$ plus Weak K\"onig's Lemma $\WKL$.

Let $M$ be a model of $\ind\Delta_0+\exp$.
We say that a subset $I$ of $M$ is a \emph{cut} if it is closed under successor and downward closed; we denote this by $I\subseteq_\ee M$.
A cut $I\subseteq_\ee M$ is \emph{proper} if $I\neq M$.
If $(M, \mathcal{X})$ is a model of $\ind\Delta_0^0$, then no proper cut appears in the second-order universe $\mathcal{X}$, and the model fails to satisfy $\ind\Sigma_1^0$ if and only if a proper $\Sigma_1^0$-definable cut exists.

We write $\log I$ for the definable set $\{\log x\mid x\in I\}$, which need not be a cut unless the cut $I$ is closed under addition. The set $\log^*I$, which is a cut only if $I$ is closed under exponentiation, is defined similarly.

We can view an element $c$ of a model $M$ as representing the set $\Ack{c}$, where $x\in \Ack{c}$ is defined to mean that the $x$-th digit in the binary expansion of~$c$ is~$1$. We refer to sets of the form $\Ack{c}$ for $c \in M$ as \emph{$M$-finite sets}. 
Let $I$ be a proper cut of $M$.
Given a set $X\subseteq I$, we say that $X$ is \emph{coded} in $I$ if there is some $c\in M$ such that $X = \Ack c\cap I$.
We define $\cod(M/I) = \{\Ack c\cap I \mid c\in M\}$.
It is well-known~\cite{simpson-smith} that if $I$ is exponentially closed, then $(I, \cod(M/I))$ is a model of $\wkl^*$.

We say that a set $A$ is \emph{unbounded} in $\IN$
(or \emph{infinite})
if $\forall b \, \exists x \! > \! b \; x\in A$.
In a model $(M, \mathcal{X}) \vDash \rcas$,
the sets in $\mathcal{X}$ that are not unbounded are
exactly the $M$-finite sets. Over $\RCA_0^*$, an unbounded set $A\subseteq\IN$ need not be in bijective correspondence with $\IN$.
Instead, there is a unique cut $J$, called the \emph{cardinality} of $A$, such that there is an increasing enumeration $\{a_i\mid i\in J\}$ of the elements of $A$.
Moreover, the cut $J$ 
is $\Sigma_1^0$-definable, namely,
\[i\in J \text{ iff } \exists x \! \in \! A \; x=a_i.\]
Conversely, every $\Sigma_1^0$-definable cut can be obtained in this way for some unbounded set $A$.

The cut $\I_1^0$ is defined to consist of those $x\in\IN$ 
such that every unbounded $A\subseteq\IN$ contains a finite subset of cardinality $x$.
By the previous paragraph, $\I_1^0$ is exactly the intersection of all $\Sigma_1^0$-definable cuts.
In particular, it is provable in $\rcas$ that 
for any $x\in \IN$ and $v\in \I_1^0$, the number $2_v(x)$ exists; also, $\I_1^0=\IN$ if and only if $\RCA_0$ holds.

We use the letters $\sigma, \tau$ to denote finite sequences, which are represented by numbers in the usual way.
We use the symbol $\emptyset$ to stand for the empty sequence.
The notation $\sigma\sqsubseteq\tau$ means that $\sigma$ is an initial segment of $\tau$. For concatenation with a one-element sequence, we write $\sigma i$
instead of $\sigma^\smallfrown\langle i\rangle$.
We will use vertical lines, as in $|\sigma|$ or $|S|$, 
to stand for the length of a finite sequence or for
the cardinality of a finite set, depending on the context.

In contrast to the standard finite-combinatorial convention, we write $[m]$ for the set $\{0,1,\dots,m-1\}$ instead of $\{1,\dots,m\}$. For an arbitrary set $A$, the notation $[A]^2$ stands for the set of unordered pairs of distinct elements of $A$, represented as usual by (codes of) ordered pairs with the first element smaller than the second one.
We write $[m]^2$ instead of $[[m]]^2$.

If a 
set $A \subseteq \IN$ is enumerated in increasing order as $A=\{a_0<a_1<\ldots\}$ and
$f\colon [A]^2\to 2$, then
$f(a_i,-)$ stands for the 
binary sequence
\[\tuple{f(a_i,a_{i+1}),f(a_i,a_{i+2}),\ldots}.\]
This convention applies also when $A$ is finite,
in which case the enumeration is $\{a_0<a_1<\ldots<a_k\}$ for some $k$, and $f(a_i,-)$ is the finite sequence
$\tuple{f(a_i,a_{i+1}),\ldots,f(a_i,a_k)}$.
If $A\subseteq\IN$ is unbounded,
we say that a colouring $f\colon [A]^2\to 2$ is \emph{stable} if  for every $x\in A$, the sequence $f(x,-)$ is eventually constant.

Ramsey's Theorem for pairs and two colours, $\RT_2^2$, 
asserts that for any colouring $f\colon [\IN]^2\to 2$ there is an unbounded \emph{homogeneous} set $H$, i.e.,
\[\fain {x,y,z,w} H f(x,y)=f(z,w).\]
Cholak, Jockusch, and Slaman~\cite{cjs:ramsey} decomposed $\rttt$ into two weaker principles, Cohesive Ramsey's Theorem for pairs, $\crttt$, and Stable Ramsey's Theorem for pairs, $\SRT^2_2$:

\begin{itemize}[leftmargin = 20 mm]
\item[$\crttt$:~~]  For every $f \colon [\IN]^2\to 2$ there exists an unbounded set $C \subseteq \IN$ \\ such that $f\restdto {[C]^2}$ is stable.

\item[$\srttt$:~~] For every stable $f \colon [\IN]^2\to 2$ there exists an unbounded \\ homogeneous set $H \subseteq \IN$.
\end{itemize}

Two other Ramsey-theoretic statements that will be mentioned are the chain-antichain principle $\cac$, which says that any partial order on $\IN$ contains an infinite chain or an infinite antichain, and the ascending-descending sequence principle $\ads$, which says that for any linear order on $\IN$ there is an infinite set on which the order either agrees with the usual natural number ordering $\le$
or agrees with the inverse of $\le$. 

Provably in $\rcas$, each of the above Ramsey-theoretic principles is equivalent to its generalization
to instances defined on arbitrary unbounded subsets of $\IN$. 
Over $\rca$, we have the following sequence of implications:
\[ \rttt \to \cac \to \ads \to \crttt, \]
proved mostly in \cite{hirschfeldt-shore}. All of these except $\ads \to \crttt$ and $\cac \to \crttt$ are known to hold over $\rcas$ as well.

When considering questions related to proof size, formally speaking we should fix a specific proof system, but the exact choice is immaterial as there are polynomial-time translations between typical Hilbert-style, natural deduction, and sequent calculus systems with cut \cite{eder:complexities-calculi}. For definiteness, we assume that our proof system is like the one in \cite[Section 2.4]{book:enderton} except that propositional tautologies are derived from a finite number of axiom schemes rather than assumed as axioms. As in the system from \cite{book:enderton},
our official set of logical symbols consists only of 
of $\neg$, $\to$, and $\forall$, with the others treated as abbreviations.

A theory $S$ \emph{polynomially simulates} a theory $T$ with respect to sentences from the class $\Gamma$ if there is a polynomial-time procedure that takes a $T$-proof of a sentence
$\gamma \in \Gamma$ and outputs an $S$-proof of $\gamma$.
In contrast, $T$ has \emph{non-elementary speedup} over $S$ with respect to $\Gamma$ if there is no elementary recursive function $f$ such that if $\pi$ is a $T$-proof of a sentence $\gamma \in \Gamma$, then there exists an $S$-proof of $\gamma$ of size at most $f(|\pi|)$.

\section{The strength of Cohesive Ramsey's Theorem}\label{sec:crttt}

In this section, we study the behaviour of $\crttt$ over $\rcas$. We show that $\crttt$ 
shares some features of $\rttt$ that make it appear ``strong'' in this context, distinguishing it from other weakenings of $\rttt$ like $\cac$ or $\ads$. 

\begin{theorem}\label{thm:crttt->i01-expclosed}
The theory $\rcas + \crttt$ proves that $\I^0_1$ is closed under $\exp$.
\end{theorem}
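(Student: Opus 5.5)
The plan is to argue by contradiction, following the usual strategy for showing that $\rttt$ makes $\I^0_1$ exponentially closed. The difference is that the colouring must be designed so that mere stability, rather than homogeneity, already forces a short $\Sigma^0_1$ cut.

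\emph{Setup.} Suppose $v\in\I^0_1$ but $2^v\notin\I^0_1$. By the definition of $\I^0_1$ there is an unbounded $A$ with no finite subset of size $2^v$. So its cardinality cut $J$ satisfies $v\in J$ and $J<2^v$. Enumerate $A=\{a_i\mid i\in J\}$. Since every $i\in J$ is below $2^v$, we may read $i$ as a binary string of length $v$, listed from its least significant bit.

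\emph{The colouring.} For $i<j$ in $J$, let $d(i,j)$ be the least position at which the binary expansions of $i$ and $j$ differ. Reading bits from the bottom ensures that this bit of $j$ is not trivially determined by $i<j$. Define
\[ f(a_i,a_j) = \text{the bit of } j \text{ at position } d(i,j). \]
Then $f$ is $\Delta^0_1$ in $A$, so it exists in $\rcas$. Apply $\crttt$, in its form for instances on arbitrary unbounded sets, to get an unbounded $C\subseteq A$ with $f\restdto{[C]^2}$ stable. Let $K\subseteq J$ be the cardinality cut of $C$, enumerated as $C=\{a_{j_k}\mid k\in K\}$.

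\emph{Extracting the tree structure.} Stability means that for each $k$, along the tail of $C$ the value "bit of $j$ at the first place where $j$ differs from $j_k$" is eventually constant. The goal is to convert this into a nested sequence of binary strings. Concretely, I would show that the tail of $C$ eventually agrees with $j_k$ on an initial segment of bits of some length $\delta_k$, and then eventually has a fixed bit at position $\delta_k$. This yields a finite-information "limit node" $s_k$ of length at most $v$.

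The key combinatorial claim is that for distinct $k<k'$ the nodes $s_k$, or the lengths $\delta_k$ taken in a suitable order, are forced to be distinct. Only $v$ levels are available, so at most about $v$ such nodes can be compatible with one common limit path. The intended consequence is a $\Sigma^0_1$-definable cut contained in $v$, roughly $\{m \mid \text{there exist } m \text{ elements of } C \text{ realizing distinct levels}\}$. Since $v\in\I^0_1$ lies in every $\Sigma^0_1$ cut, this is a contradiction. If the levels can repeat, I would instead refine $C$ with a $\Delta^0_1$ thinning so that each new element is chosen after the previous one has "stabilized". Then consecutive elements of the refined $C$ must diverge at strictly increasing levels, giving an injection of the refined cardinality cut into $v$.

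\emph{Main obstacle.} The hardest step is this last one: making the "eventual" behaviour usable without $\ind\Sigma^0_1$. The quantities $\delta_k$ and $s_k$ are only limits, that is, $\Sigma^0_2$ data. I would try to show that the relevant cut can be defined by a $\Sigma^0_1$ formula, by existentially quantifying over finite witnesses in $C$ that exhibit a long chain of divergences. I would then use only $\ind\Delta^0_0$ and bounded reasoning on those finite witnesses to see that such a chain has length at most $v$. Choosing $f$ so that this $\Sigma^0_1$ formulation exists is where I expect adjustments may be needed. For instance, I might replace "bit of $j$ at $d(i,j)$" by a colouring that records whether $j$ goes left or right in a fixed binary tree of depth $v$ on $J$, which would make the divergence witnesses purely finite.
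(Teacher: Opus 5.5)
\textbf{There is a fatal gap: the colouring you chose can never work.} Your $f$ is transitive. Let $\mathrm{rev}(i)$ be the number whose $v$-bit expansion is that of $i$ reversed. The least position where $i$ and $j$ differ becomes the most significant position where $\mathrm{rev}(i)$ and $\mathrm{rev}(j)$ differ. Hence $f(a_i,a_j)=1$ iff $\mathrm{rev}(i)<\mathrm{rev}(j)$. So $f$ only compares the order of $A$ with the $\Delta^0_1$ linear order $a_i\triangleleft a_j \iff \mathrm{rev}(i)<\mathrm{rev}(j)$. Your binary-tree variant has the same defect: ``$j$ goes right at the meet node'' just compares $i,j$ in the left-to-right order of the leaves.

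Applying $\ads$ to $\triangleleft$ gives an unbounded set homogeneous for $f$, which in particular solves your $\crttt$ instance. No $\Delta^0_1$ thinning helps, since subsets of homogeneous sets stay homogeneous. So any argument deriving a contradiction from a stable restriction $f\restdto{[C]^2}$ would show that $\rcas+\ads$ proves exponential closure of $\I^0_1$. That is false: $\cac\to\ads$ over $\rcas$, and $\rcas+\cac$ does not prove this closure \cite[Theorem 3.16]{fkk:weakcousins}. The paper uses exactly this fact to get that $\rcas+\cac$ does not prove $\crttt$.

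Finitely, this is Erd\H{o}s--Szekeres: colourings induced by linear orders have homogeneous sets of size about the square root of the domain, and $\I^0_1$ is already closed under multiplication. For instance, the $2^{\lceil v/2\rceil}$ palindromic $v$-bit strings satisfy $\mathrm{rev}(i)=i$, so they are pairwise coloured $1$. Concretely, your step ``the tail of $C$ eventually agrees with $j_k$ on $\delta_k$ bits'' is unjustified: stability fixes only the bit at the divergence position, not the position itself. Your ``distinct levels'' claim also fails for $\triangleleft$-monotone sets. The $\Sigma^0_2$ issue you flag as the main obstacle is secondary; the real obstacle is the choice of colouring.

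\textbf{The missing idea} is that the instance must have genuinely exponential Ramsey behaviour, and the paper obtains such an instance non-constructively.
\begin{enumerate}[(1)]
\item It passes to a proper exponentially closed $\Sigma^0_1$-cut $M$ of the model $(L,\mathcal X)$. It works in $(M,\cod(L/M))$, which satisfies $\crttt$ iff $(L,\mathcal X)$ does.
\item It fixes $k\in\I^0_1<2^{k/4}$ and an $L$-finite $A$ of size $2^{k/4}$ that is cofinal in $M$.
\item For a solution $\widehat C$, the cardinality cut $J_C$ is not closed under addition; otherwise $\log J_C$ would be a $\Sigma^0_1$-cut below $k$. So one may trim $C$ to $2j$ elements with $\I^0_1<j<J_C$.
\item Then each of the first $j$ rows of $f$, restricted to the last $j$ elements of $C$, must contain a constant run of length $k+1$.
\item A union bound over $j\in[k,2^{k/4}]$ and over all $2j$-element subsets of $A$ shows that a uniformly random $f\colon[A]^2\to 2$ avoids this with positive probability. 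Such an $f$ therefore exists in $L$, and $\widehat f$ is an instance of $\crttt$ with no solution.
\end{enumerate}
No explicit tree- or order-based colouring can replace this random one.
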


\begin{proof}
We present the proof in a model-theoretic setting, 
working with a model of $\rcas$ rather than
in the theory, but we emphasize that this is a matter of convenience rather than necessity. The argument could be given a (perhaps somewhat unwieldy) purely syntactic form.

Let $(L, \cal X)$ be a model of $\rcas$ with $\I^0_1$ not closed under $\exp$. 
Then obviously $(L, \mathcal{X}) \not \vDash \rca$, since $\rca$ proves $\I^0_1 = \IN$. As a consequence (see \cite[Lemma 9]{ky:categorical}) there is a proper exponential $\Sigma^0_1$-cut $M$ in $(L, \cal X)$. 
As mentioned in the Preliminaries, the structure 
$(M, \cod(L/M))$ is a model of $\wkls$.
One can check that $\I^0_1(M,\cod(L/M)) = \I^0_1(L, \cal X)$; in the remainder of this argument, we will refer to
$\I^0_1(M,\cod(L/M))$ simply as $\I^0_1$. Moreover, 
by \cite[Corollary 3.10]{fkk:weakcousins}, $(M, \cod(L/M))$ satisfies $\crttt$ if and only if $(L, \cal X)$ does. So, to prove that 
$(L, \mathcal{X}) \not \vDash \crttt$ it suffices to show 
$(M, \cod(L/M)) \not \vDash \crttt$.
Given an $L$-finite set $S$, we will write $\widehat S$ for $S \cap M$, which is an element of $\cod (L/M)$.

Since $\I^0_1$ is not closed under $\exp$,
there is some $k \in M$ such that $k \in \I^0_1 < 2^{k/4}$. Fix some such $k$.
By the assumption that $\I^0_1 < 2^{k/4}$, we can also fix
an $L$-finite set $A$ of cardinality $2^{k/4}$ such that $\widehat A$ is cofinal in $M$. 

To prove that $(M, \cod(L/M)) \not \vDash \crttt$, we will use a probabilistic argument in $L$. More precisely, we will isolate a 
finite-combinatorial (from the point of view of $L$) property of colourings $f \colon [A]^2 \to 2$ 
that occurs with non-zero probability for random $f$
and has the infinite-combinatorial (from the point of view of $M$) consequence that $\widehat f$ is an instance of $\crttt$ with no solution in $(M,\cod(L/M))$. 

Consider a fixed $f \colon [A]^2 \to 2$ and a hypothetical set $C \subseteq A$ such that $\widehat C$ is a solution to $\crttt$
on instance $\widehat f$. We make the following observations:

\begin{enumerate}[(i)]

    \item As $\widehat C$ is unbounded in $M$ and $C\subseteq A$, the cardinality of $\widehat C$ is a $\Sigma^0_1$-cut $J_C$ such that $k \in J_C < 2^{k/4}$. Moreover, $J_C$ is not closed under addition, because otherwise $\log J_C$ would be a $\Sigma^0_1$-cut with $\log J_C < k$, contradicting the assumption that $k \in \I^0_1$.
    
    \item Thus, by possibly taking some elements out of $C \setminus \widehat C$, we may assume that there is some $j$
    satisfying 
    \[\I^0_1 < j < J_C < |C| = 2j \le 2^{k/4}.\]
    
    \item Let $c_1, \ldots, c_{2j}$ be the elements of $C$ enumerated in increasing order; observe that all of $c_1, \ldots, c_j$ belong to $\widehat C$. For each $i = 1,\ldots, j$, consider the $j$-element binary sequence
    \[s_i:= 
    \tuple{ f(c_i,c_{j+1}), \ldots, f(c_i,c_{2j}) } \]
    (that is, the sequence $f(c_i,-)$ restricted to the positions corresponding to $c_{j+1},\ldots, c_{2j}$). Note that $s_i$ is the characteristic
    function of the set
    \[S_i = \{y \in C \mid y > c_j \land f(c_i,y) = 1\}.\] 
    By the assumption that $\widehat{C}$ is assumed to be a solution to $\crttt$ for $\widehat{f}$, we know that for each $i$ either $\widehat{S_i}$ or $\widehat{C}\setminus \widehat{S_i}$ is cofinal in $M$. Hence, since $k+1 \in \I^0_1$, each $s_i$ must contain a run of at least $k+1$ consecutive 1's or of at least $k+1$ consecutive 0's.

\end{enumerate}

Let us now assume that we specify $f \colon [A]^2 \to 2$
by a random process in which $f(x,y)$ is determined by flipping a fair coin, independently for each $\{x,y\} \in [A]^2$.
As mentioned, in order to prove that $(M, \cod(L/M)) \not \vDash \crttt$, and thus complete the argument, we want to find an event with probability strictly less than 1 that is implied by the existence of a solution to $\crttt$ for $\widehat f$ in $\mathcal X$. 

Fix some $j \in [k,2^{k/4}]$ and a set $C$ of cardinality $2j$ as above. 
By observation (iii), if $\widehat C$ is a solution to $\crttt$ for 
$\widehat f$, then for each $i = 1, \ldots, j$ the following event $E_{j,C,i}$ occurs: the string $s_i$ contains a run of 1's or a run of 0's of length $k+1$.

We claim that for any fixed $i$,
the probability of $E_{j,C,i}$ is less than $\frac{j}{2^k}$. 
To see this, note that the probability that we get a run of length $k+1$ starting at any fixed position in $s_i$ is exactly $\frac{1}{2^k}$, and $s_i$ has length $j$, so there are fewer than $j$ possible starting positions for the run. Thus, the union bound gives $Pr(E_{j,C,i}) < \frac{j}{2^k}$, as claimed. 

For fixed $j, C$, the events $E_{j,C,i}$ with $i$ ranging in $\{1,\ldots,j\}$ are independent, so we get
\[\Pr\left(\bigcap_i E_{j,C,i}\right) < \left(\frac{j}{2^k}\right)^j.\]

Now, for $j \in [k,2^{k/4}]$ still fixed, consider the event $E_j$ := ``there is \emph{some} $C \subseteq A$ such that $|C| = 2j$ and $\bigcap_j E_{j,C,i}$ occurs''. The number of $2j$-element subsets of $A$ is
\[\binom{2^{k/4}}{2j} \le \left(2^{k/4}\right)^{2j} 
= 2^{kj/2},\]
so, by the union bound again, we get 
\[\Pr(E_j) < 2^{kj/2}\left(\frac{j}{2^k}\right)^j = \left(\frac{j2^{k/2}}{2^k}\right)^j \le \left(\frac{2^{3k/4}}{2^k}\right)^j = \left(\frac{1}{2^{k/4}}\right)^j \le \frac{1}{2^{k^2/4}}.\]
Here, the second inequality holds because 
$j \le 2^{k/4}$, and the last one holds because $j \ge k $.

Finally, using the union bound one more time, we get

\[\Pr\left(\bigcup_{j=k}^{2^{k/4}}E_j\right) < {2^{k/4}}\frac{1}{2^{{k^2}/{4}}} = {2^{-\frac{k^2-k}{4}}} \ll 1.\]

But if $f$ is such that a solution to $\crttt$ for $\widehat f$ exists, then $\bigcup_{j=k}^{2^{k/4}}E_j$ must hold for $f$. This concludes the proof.
\end{proof}

We now discuss a few consequences of Theorem
\ref{thm:crttt->i01-expclosed}. Firstly,
$\crttt$ gives significant proof speedup
over $\rcas$ with respect to proofs 
of $\forall \Pi^0_3$, and in fact much simpler, sentences. This answers a question asked in \cite{kowalik:cac}.
Secondly, we are able to answer two questions
left open in \cite{fkk:weakcousins}: 
whether the implication $\ads \to \crttt$ or at least
$\cac \to \crttt$ is provable in $\rcas$, 
and whether $\crttt$ is $\Pi_4$-conservative
over $\rcas$ (it was shown not to be
$\Pi_5$-conservative in \cite{fkk:weakcousins}).

\begin{corollary}\label{cor:crttt-speedup}
$\rcas + \crttt$ has non-elementary speedup over $\rcas$ with respect to proofs of $\Delta_0(\exp)$ sentences.
\end{corollary}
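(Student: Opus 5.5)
The plan is to follow the standard speedup argument from \cite{kwy:ramsey-proof-size}, replacing the use of $\rttt$ there by Theorem~\ref{thm:crttt->i01-expclosed}. The sentences witnessing the speedup will be of the form
\[\gamma_a := \exists y\; y = 2_{2_a},\]
or, to stay within $\Delta_0(\exp)$, a bounded variant such as ``$2_{2_a}$ exists'' expressed by iterating the function symbol $2^x$ applied to the numeral for $2_a$. Here $a$ ranges over standard numbers. We will then show two things: $\rcas + \crttt$ proves $\gamma_a$ with proofs of size polynomial (or even linear) in $a$, while any $\rcas$-proof of $\gamma_a$ must have size at least roughly $2_{a}$ (more generally, non-elementary in $a$). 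Together these give the non-elementary speedup.

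For the upper bound, work in $\rcas + \crttt$. The numeral $0$ lies in $\I^0_1$, and $\I^0_1$ is a cut, so by Theorem~\ref{thm:crttt->i01-expclosed} we may prove, by $a$ successive applications of closure under $\exp$, that $2_a \in \I^0_1$. Each step costs a constant-size inference plus the size of the term, so this costs $O(a)$ up to polynomial overhead. Next, the fact recalled in the Preliminaries that for $v \in \I^0_1$ the number $2_v(x)$ exists is a single theorem of $\rcas$. Instantiating it with $v = 2_a$ yields that $2_{2_a}$ exists, which is $\gamma_a$ (after unfolding the $\Delta_0(\exp)$ formulation). The one care point is to represent $2_a$ by a short term, namely the $a$-fold iterate of $2^x$ applied to $1$, so that the proof has size polynomial in $a$ rather than in $2_a$.

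For the lower bound, I would use the known fact (Pudlák-style, as in \cite{kwy:ramsey-proof-size}) that $\rcas$ is $\Pi_2$-conservative over $\ind\Delta_0+\exp$. The crucial point is that this conservativity comes with only an elementary (in fact polynomial) proof transformation, via the usual interpretation or cut-elimination-free argument. Then invoke the theorem that $\ind\Delta_0+\exp$-proofs of ``$2_{m}$ exists'' require size roughly $\log^* m$ up to an elementary function, i.e.\ the existence of $2_{2_a}$ needs proofs of size about $2_a$ modulo elementary slack. This is the standard Solovay/Pudlák lower bound via shortening cuts: any short proof in $\ind\Delta_0+\exp$ stays valid in a cut closed under exponentiation only a bounded number of times. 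Composing the elementary translation with this lower bound shows that no elementary $f$ can bound $\rcas$-proof size in terms of $\rcas+\crttt$-proof size.

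I expect the main obstacle to be the lower bound. That is where one must cite or reconstruct the precise nonelementary lower bound for proofs of iterated-exponential existence statements in $\rcas$ (equivalently $\bd\Sigma^0_1 + \exp$), and check that the sentence class is really $\Delta_0(\exp)$. My first attempt would be to import the corresponding lemma directly from \cite{kwy:ramsey-proof-size}, where exactly this was done for $\rttt$ using only exponential closure of $\I^0_1$. Since our upper-bound argument uses nothing about $\crttt$ beyond Theorem~\ref{thm:crttt->i01-expclosed}, their argument should transfer verbatim.
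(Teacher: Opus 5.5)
There is a genuine gap: the sentences $\gamma_a$ (``$2_{2_a}$ exists'') witness no speedup, because $\rcas$ itself proves them with proofs of size polynomial in $a$.

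Your upper-bound argument uses $\crttt$ only to get $2_a \in \I^0_1$, but $\rcas$ already has that by Solovay's shortening of cuts. Since $\I^0_1$ is closed under addition, $\{x : 2^x \in \I^0_1\}$ is a cut. Shrink it to a subcut closed under addition, apply $\{x : 2^x \in \cdot\,\}$ again, and iterate $a$ times, keeping formula size linear by the usual trick. This gives $2_a \in \I^0_1$, hence $\gamma_a$, in size $\mathrm{poly}(a)$. Even $\EA$ can do the same, starting from the addition-closed cut $\{x : \forall z\,\exists w\; w = 2_x(z)\}$.

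Correspondingly, your lower bound is off by one exponential. The bound $\log^* m$ you quote, applied to ``$2_m$ exists'' with $m = 2_a$, gives only $a$, not $2_a$.

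The missing idea is the first step of the paper's proof. Exponential closure of $\I^0_1$ makes $\log^*(\I^0_1) = \{x : 2_x \in \I^0_1\}$ a cut; it is closed under successor since $2_{x+1} = 2^{2_x}$. Shortening \emph{this} cut is what gains the extra exponential: it gives $2_{2_a} \in \I^0_1$, hence ``$2_{2_{2_a}}$ exists'', in size $\mathrm{poly}(a)$. That family would give speedup for $\Sigma_1$ sentences.

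It still would not give the stated result, because tower-existence sentences are $\Sigma_1$, not $\Delta_0(\exp)$. Your bounded rendering does not help. Writing $2_{2_a}$ out as a term needs about $2_a$ nested occurrences of $2^x$, so the sentence is itself non-elementarily long and is provable in $\EA$ in size linear in its length.

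The paper instead uses the finite consistency statements $\mathrm{Con}_{2_n}(\EA)$, which are $\Delta_0(\exp)$ sentences of size $O(n)$.
For the upper bound, $\rcas$ proves $\mathrm{Con}^{\log^*(\I^0_1)}(\EA)$. If $2_k \in \I^0_1$, then $2_{2_k}$ exists, so the Skolem functions of a finite axiomatization of $\EA$ can be iterated $2_k$ times. This rules out cut-free refutations with $2_k$ symbols, and, by cut-elimination bounds, refutations with $k$ symbols. Since $\log^*(\I^0_1)$ is a cut under $\crttt$, shortening it gives short $\rcas + \crttt$-proofs of $\mathrm{Con}_{2_n}(\EA)$.
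For the lower bound, $\EA$-proofs of $\mathrm{Con}_{2_n}(\EA)$ must have size at least $(2_n)^{\varepsilon}$ (cf.\ \cite{pudlak:handbook-lengths-of-proofs}). This is transferred to $\rcas$ by the polynomial $\Pi_2$ simulation of \cite[Theorem 4.9]{kwy:ramsey-proof-size}, which is the same transfer step you use at the end.
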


\begin{proof}
The proof is essentially the same as for $\rttt$ in \cite[Section 3]{kwy:ramsey-proof-size} -- in \cite{kwy:ramsey-proof-size}, a detailed  argument is given for proofs of $\Sigma_1$ sentences, but it remarked there that the speedup applies also to finite consistency statements that can be expressed in a $\Delta_0(\exp)$ way.

Let $\mathrm{Con}_n(T)$ be the statement
``there is no inconsistency proof in $T$ with at most $n$ symbols'', and for a definable cut $J$,
let $\mathrm{Con}^J(T)$ mean ``there is no inconsistency proof in $T$ that belongs to $J$''. Since $\rcas + \crttt$ proves that the cut $\I^0_1$ is exponentially closed, it also proves that 
$\log^*(\I^0_1)$ is a cut. But it is known
that $\mathrm{Con}^{\log^*(\I^0_1)}(\EA)$ holds provably in $\rcas$. The argument is essentially as follows: if $2_k \in \I^0_1$ then $2_{2_k}$ exists, which means that the exponential function, and therefore every Skolem function for a fixed finite axiomatization of $\EA$, can be iterated $2_k$ times. So, there is no cut-free
proof of inconsistency in $\EA$ with at most $2_k$ symbols, and by the known upper bounds for cut elimination,
there is no proof of inconsistency in $\EA$ with at most $k$ symbols; \emph{a fortiori}, there cannot be one with code below $k$.

Thus, $\rcas + \crttt$ proves the consistency of $\EA$ on a cut. As a consequence (cf.~\cite{pudlak:handbook-lengths-of-proofs} for this sort of argument) it has iterated exponential speedup
over $\EA$ on statements of the form $\mathrm{Con}_{2_n}(\EA)$, which can be expressed as 
$\Delta_0(\exp)$ sentences of size $O(n)$. 
However, by \cite[Theorem 4.9]{kwy:ramsey-proof-size}, 
$\rcas$ has at most polynomial speedup
over $\EA$ with respect to proofs of $\Pi_2$ statements,
so the iterated exponential speedup
of $\rcas + \crttt$ over $\EA$ for the 
$\mathrm{Con}_{2_n}(\EA)$ statements
implies analogous speedup over $\rcas$ as well.
\end{proof}

\begin{corollary}
$\rcas + \cac$ does not prove $\crttt$.
\end{corollary}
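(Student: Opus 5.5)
The plan is to separate the two theories through a property of the cut $\I^0_1$. Theorem \ref{thm:crttt->i01-expclosed} shows that $\rcas + \crttt$ proves $\I^0_1$ is closed under $\exp$. It therefore suffices to find a model of $\rcas + \cac$ in which $\I^0_1$ is not exponentially closed, and that model cannot satisfy $\crttt$.

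To build such a model, I would use Kowalik's non-speedup analysis of $\cac$ over $\rcas$ \cite{kowalik:cac}, as discussed in the introduction. Its key ingredient is that $\cac$ implies no superpolynomial closure properties of $\I^0_1$. The reason is that Ramsey numbers for colourings coming from partial orders are polynomial, by Dilworth's theorem.

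Concretely, I would start with a model $(L,\mathcal X)\vDash\rcas$ in which $\I^0_1$ is closed under multiplication but not under $\exp$. A natural choice is a model where $\I^0_1 = \{x : x < a^n \text{ for some } n\in\omega\}$ for some nonstandard $a$ with $2^a$ above the cut. I would then extend it to a model of $\rcas+\cac$ without changing $\I^0_1$.

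There are two ways to carry out the extension:
\begin{itemize}
\item cite directly the result from \cite{kowalik:cac} that every countable model of $\rcas$ (possibly with some extra care) can be extended to a model of $\rcas+\cac$ with the same first-order part and the same $\Sigma^0_1$-cuts below a given point;
\item redo the argument by iterating a forcing step, in the style of \cite{fkk:weakcousins}, passing to an exponential $\Sigma^0_1$-cut $M$ and working in $(M,\cod(L/M))$.
\end{itemize}
For the second route, the step for a single partial order works as follows. Given an $L$-finite partial order on a set $A$ of size about $a^{n}$ whose trace is cofinal in $M$, Dilworth gives a chain or antichain of size about $a^{n/2}$. Polynomial closure of the relevant cut then lets one keep the solution's cardinality cut from shrinking $\I^0_1$ below polynomial-in-$a$ elements.

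The main obstacle is making sure that $\I^0_1$ is not enlarged to become exponentially closed in the extension. Adding a chain or antichain solution can add new unbounded sets, and these can only shrink $\I^0_1$. So the real danger runs the other way: I must check that the final $\I^0_1$ still has an element $k$ with $2^k$ outside the cut. The first route avoids this issue if Kowalik's statement is strong enough. I would first try to quote from \cite{kowalik:cac} that $\rcas+\cac$ does not prove any superpolynomial closure of $\I^0_1$. That statement is in fact what underlies the polynomial simulation result, because otherwise the speedup argument of Corollary \ref{cor:crttt-speedup} would apply to $\cac$.

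As a fallback, I would argue by contradiction. If $\rcas+\cac\vdash\crttt$, then $\rcas+\cac$ would prove $\mathrm{Con}^{\log^*(\I^0_1)}(\EA)$ via Theorem \ref{thm:crttt->i01-expclosed}. It would then have non-elementary speedup over $\rcas$ for $\forall\Pi^0_3$ sentences, as in the proof of Corollary \ref{cor:crttt-speedup}. This contradicts Kowalik's theorem that $\rcas$ polynomially simulates $\rcas+\cac$ for such sentences. This fallback is cleaner, and I would probably adopt it as the main proof.
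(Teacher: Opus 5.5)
Your proposal is correct. The argument you end up adopting is the alternative the paper itself notes in parentheses. If $\rcas+\cac\vdash\crttt$, then $\rcas+\crttt$-proofs of the $\Pi_1$ sentences $\mathrm{Con}_{2_n}(\EA)$ turn into $\rcas+\cac$-proofs with only constant overhead. By Kowalik's polynomial simulation \cite{kowalik:cac}, these would then have $\rcas$-proofs of polynomially related size, contradicting Corollary~\ref{cor:crttt-speedup}.

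The paper's main line is instead your first plan, but it does not build the model. It simply cites \cite[Theorem 3.16]{fkk:weakcousins}, which states outright that $\rcas+\cac$ does not prove exponential closure of $\I^0_1$. Combined with Theorem~\ref{thm:crttt->i01-expclosed}, that is the whole proof. Two points about your two ``routes'' for constructing such a model:
\begin{itemize}
\item The extension result you attribute to \cite{kowalik:cac} in the first route is not what that paper proves. It proves polynomial simulation via forcing interpretations.
\item The second route is only a sketch. Its genuinely hard part is keeping a fixed $k$ inside $\I^0_1$ while solutions are added, and that is exactly the content of \cite[Theorem 3.16]{fkk:weakcousins}.
\end{itemize}

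As for what each approach buys: the paper's route needs only the weaker, model-theoretic fact that $\cac$ does not force exponential closure of $\I^0_1$, so it is the more direct argument. Your speedup route needs the stronger polynomial-simulation theorem. In exchange, it requires no model construction at all, because the proof of Corollary~\ref{cor:crttt-speedup} already turns exponential closure of $\I^0_1$ into non-elementary speedup.
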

\begin{proof}
By \cite[Theorem 3.16]{fkk:weakcousins}, 
$\rcas + \cac$ does not imply
exponential closure of $\I^0_1$. (In fact, by \cite{kowalik:cac}, it does not have superpolynomial
speedup over $\rcas$.)
\end{proof}

\begin{corollary}\label{cor:crttt-not-conservative}
$\rcas + \crttt$ is not $\Pi_4$-conservative over $\rcas$.
\end{corollary}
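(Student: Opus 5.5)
The plan is to exhibit a $\Pi_4$ sentence that $\rcas + \crttt$ proves but $\rcas$ does not. The natural candidate comes from Theorem \ref{thm:crttt->i01-expclosed}: exponential closure of $\I^0_1$. However, $\I^0_1$ is defined by quantifying over sets, so we first replace it by a purely first-order consequence. Because $\rcas + \crttt$ is $\forall\Pi^0_3$-conservative over $\rcas$ (as recalled from \cite{fkk:weakcousins}), the separating sentence must genuinely have complexity $\Pi_4$.

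The first-order content of ``$\I^0_1$ is closed under $\exp$'' is best stated through $\Sigma_1$-definable cuts. I would take the sentence: for every $\Sigma_1$ formula $\varphi(x,p)$ (handled by a universal $\Sigma_1$ formula with an index $e$ as parameter), and all $p,k$, if $\varphi(\cdot,p)$ defines a cut (or more simply, if $\varphi(0,p)$ and $\forall x\,(\varphi(x,p)\to\varphi(x+1,p))$) and this cut contains every element of $\I^0_1$, then it contains $2^k$ for $k \in \I^0_1$. Written this way, $\I^0_1$ still appears. A cleaner, explicitly first-order form is the following: for all $e,p,k$, if $\varphi_e(\cdot,p)$ is closed under successor and contains $0$, then either $\varphi_e(2^k,p)$ holds or there is a $\Sigma_1$-cut, obtained uniformly from $e,p$ by composing with $\log$, that omits $k$.

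Concretely, I would use this sentence: for all $e,p,k$, if $\theta_e(\cdot,p)$ is progressive, then $\theta_e(2^k,p)$ holds, or the cut $\{x : \theta_e(2^x,p)\}$ fails to be progressive. Equivalently, $\exists x\,(\theta_e(2^x,p)\wedge\neg\theta_e(2^{x+1},p))$ yields a failure that the theorem refutes. The step I expect to be hardest is to verify in $\rcas+\crttt$ that this follows from the theorem, and to count quantifiers. Progressiveness of a $\Sigma_1$ formula is $\Pi_2$, so the implication ``progressive $\to$ conclusion'' gives $\forall(\Sigma_2\vee\cdots)$. Universally closing over $e,p,k$ and combining with a $\Sigma_1$ or $\Pi_2$ conclusion keeps the sentence within $\Pi_3$ or $\Pi_4$. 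If it lands in $\Pi_3$, this would contradict $\forall\Pi^0_3$-conservativity, which signals that the real sentence must involve a $\Sigma_2$ inner part. So the right sentence is likely: for every $e,p$ with $\theta_e(\cdot,p)$ progressive and every $k$ with $k\in$ all $\Sigma_1$ cuts, $\ldots$. This is inherently higher than $\Pi_4$, so instead I would fix a single cut and quantify over the witnessing cuts explicitly.

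A safer route uses the existing literature. \cite{fkk:weakcousins} already shows that $\crttt$ is not $\Pi_5$-conservative via some sentence, and \cite{kwy:ramsey-proof-size} proves that $\rttt$ is not $\Pi_4$-conservative over $\rcas$ precisely by deriving exponential closure of $\I^0_1$. I would reuse that derivation verbatim: whatever $\Pi_4$ sentence they extract from ``$\I^0_1$ is closed under $\exp$'' and show unprovable in $\rcas$, via a model of $\rcas$ where $\I^0_1$ is not exponentially closed, for instance $(M,\Delta_1\text{-definable sets})$ with $M\models \bd\Sigma_1+\exp+\neg\ind\Sigma_1$ suitably chosen, is now provable in $\rcas+\crttt$ by Theorem \ref{thm:crttt->i01-expclosed}. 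So the proof reduces to citing that argument. The main obstacle, namely checking that the extracted sentence is exactly $\Pi_4$ and fails in some model of $\rcas$, is handled there.
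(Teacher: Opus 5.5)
Your proposal never produces a working separating sentence, and the missing idea is the one the paper's proof depends on. $\I^0_1$ is the intersection of all $\Sigma^0_1$-definable cuts \emph{with set parameters}. So Theorem~\ref{thm:crttt->i01-expclosed} says nothing by itself about lightface $\Sigma_1$-cuts. A $k$ lying in every lightface $\Sigma_1$-cut need not lie in $\I^0_1$ (e.g.\ when $\ind\Sigma_1$ holds but $\ind\Sigma^0_1$ fails), and then the theorem cannot be applied to it. The paper bridges this gap with \cite[Lemma 5.4]{fkwy:wkl0star}: if $\neg\ind\Sigma_1$ holds, every proper $\Sigma^0_1$-definable cut is already $\Sigma_1$-definable with only number parameters. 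Hence $\rcas$ proves that, under $\neg\ind\Sigma_1$, $\I^0_1$ equals the lightface $\Pi_3$-definable cut of those $k$ such that every unbounded $\Delta_1$-definable set has a $k$-element subset. This gives the sentence ``if $\neg\ind\Sigma_1$, then for every $k$, if every unbounded $\Delta_1$-definable set has a $k$-element subset, then every such set has a $2^k$-element subset''. It is $\Pi_4$, it is provable in $\rcas+\crttt$ by the theorem together with the lemma, and it is unprovable in $\rcas$ essentially by the argument of Corollary~\ref{cor:crttt-speedup}.

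The rest of your exploration goes astray at exactly the points where this is needed.
\begin{itemize}
\item Your complexity worry is unfounded. ``$k$ belongs to every lightface $\Sigma_1$-cut'' is $\forall e,p\,(\neg\mathrm{prog}(e,p)\vee\theta_e(k,p))$, which is $\Pi_3$. So $\forall k\,(\Pi_3\to\Pi_3)$ is exactly $\Pi_4$, not ``inherently higher''.
\item The concrete sentence you did write down drops the hypothesis on $k$ altogether. For a fixed progressive $J$ it says that if $\{x:2^x\in J\}$ is progressive then $J=\IN$, i.e.\ that no proper $\Sigma_1$-cut is closed under doubling. This fails in every model of $\neg\ind\Sigma_1$: take $\{y:\exists j\in J\; y<2^j\}$ for a proper $\Sigma_1$-cut $J$. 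Since $\neg\ind\Sigma_1$ is consistent with $\rcas+\crttt$ by $\forall\Pi^0_3$-conservativity, the sentence is not a theorem of $\rcas+\crttt$. Being $\Pi_3$, it could not have separated the theories anyway, as you observed.
\item The ``safer route'' is not a proof. Deferring to an unspecified sentence from the $\rttt$ literature leaves the lightface/boldface issue unverified. Moreover, the $\Pi_4$ non-conservation of $\rttt$ over $\rcas$ that the paper cites comes from \cite{kky:infinity-weak} via $\mathrm{C}\Sigma_2$ and $\rca+\rttt\vdash\bd\Sigma_2$, not via $\I^0_1$. Indeed, $\Pi_4$-conservativity of $\crttt$ was left open in \cite{fkk:weakcousins}, so there was no ready-made argument to reuse.
\end{itemize}
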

\begin{proof}
It was shown in \cite[Lemma 5.4]{fkwy:wkl0star}
that if $(M,\mathcal{X}) \vDash \rcas$ and $A \in \mathcal{X}$ is such that $\neg \ind \Sigma_1(A)$ holds,
then every $\Sigma^0_1(M,\mathcal{X})$-definable proper cut in $M$
is in fact $\Sigma_1(A)$-definable. As a result, 
it is provable in $\rcas$ that if $\neg \mathrm{I}\Sigma_1$ holds, then
$\I^0_1$ is equal to the (lightface) $\Pi_3$-definable cut of those $k$ such that every unbounded $\Delta_1$-set has a $k$-element subset. 

The statement ``if $\neg \mathrm{I}\Sigma_1$ holds, then for every $k$,  
if every unbounded $\Delta_1$-definable set has a $k$-element subset, then every unbounded $\Delta_1$-definable set has 
a $2^k$-element subset'' is $\Pi_4$. By Theorem \ref{thm:crttt->i01-expclosed} and the discussion above, this statement is provable in $\rcas + \crttt$. On the other hand,
it is unprovable in $\rcas$, essentially by 
the argument from the proof of Corollary \ref{cor:crttt-speedup}.
\end{proof}

We conclude this section with a remark and a question. 

\begin{remark}\label{rem:fin-coh}
This remark is largely inspired by some comments and questions of Keita Yokoyama (private communication). It was shown in \cite{sun:fin-coh} that $\fincoh$, the restriction of the well-known cohesive set principle $\coh$ (which is a strengthening of $\crttt$) to instances that are finite families of sets, implies $\rca$ over $\rcas$. 
Looking at conservativity and proof size lets us give a finer analysis of the strength of $\fincoh$.

The argument from \cite{sun:fin-coh} shows in fact that if $k > \I^0_1$ and each instance of $\coh$ of the form $(R_i)_{i < k}$ has a solution, then $\rca$ holds. 

On the other hand, let $\I^0_1\text{-}\coh$ be the further restriction of $\fincoh$ to instances of the form $(R_i)_{i < k}$ for $k \in \I^0_1$. Then a standard argument
shows that $\I^0_1\text{-}\coh$ is $\forall\Pi^0_3$-conservative over $\rcas$ and hence does not prove $\rca$. However, Yokoyama has pointed out that a simple modification of the proof of Theorem \ref{thm:crttt->i01-expclosed} shows that
$\I^0_1\text{-}\coh$ implies exponential closure of $\I^0_1$. So, by the proof of Corollary \ref{cor:crttt-speedup}, $\rcas + \I^0_1\text{-}\coh$ has non-elementary speedup over $\rcas$ w.r.t.~proofs of $\Delta_0(\exp)$ sentences.

An even more restricted version of $\fincoh$, say $(\log \I^0_1)\text{-}\coh$, allows only instances of the form $(R_i)_{i < k}$ for $2^k \in \I^0_1$. Here the picture changes again: by adapting the well-known proof of $\rca + \ads \vdash \coh$ from \cite{hirschfeldt-shore}, it is possible to prove 
$\rcas + \ads \vdash (\log \I^0_1)\text{-}\coh$.
Thus, by \cite{kowalik:cac}, $(\log \I^0_1)\text{-}\coh$
does not lead to any nontrivial proof speedup over $\rcas$ for $\Pi^0_3$ statements. 
\end{remark}

\begin{question}
Does $\ads + \text{``}\I^0_1 \text{ is closed under } \exp\text{''}$ imply $\crttt$ over $\rcas$? The same question can be asked for $\cac$ instead of $\ads$.
\end{question}

\section{Stable Ramsey's Theorem: upper and lower bounds}\label{sec:bounds}

We would now like to investigate the effect of $\srttt$ 
on proof size over $\rcas$. Before considering that topic directly, however, we need to study finite colourings that turn out to be related to $\srttt$, and to prove bounds on Ramsey numbers for such colourings. This also leads to a characterization of the closure properties of
$\I^0_1$ implied by $\srttt$.

We start by isolating the relevant class of finite colourings -- in fact, not a single class but a parametrized family of classes -- and proving a lower bound on the associated Ramsey numbers.

\begin{definition}
    Let $A \subseteq \IN$ be finite, $A = \{a_0 < a_1 <\ldots < a_\ell\}$, let $f\colon [A]^2\rightarrow 2$,
    and let $k \in \IN$. We say that $f$ is \emph{at most $k$-unstable} if for every $i$, the sequence $f(a_i,-)$ contains at most $k$ alternating blocks of successive 0's or successive 1's: that is, if there are at most $k-1$ numbers $j$, with $i<j<\ell$, such that $f(a_i,a_j) \neq f(a_i,a_{j+1})$.
\end{definition}

\begin{lemma}\label{lem:SRT_lb}
    $(\EA)$ 
    For any $s\geq 4$ and $t\in\IN$, let 
    \[k=st, \ell=(2\log s)^t, m=s^t.\]
    Then, there exists a colouring $f\colon [m]^2\to 2$ such that 
    \begin{enumerate}[(a)]
        \item $f$ is at most $k$-unstable,
        \item there is no homogeneous subset for $f$ of size $\ell$.
    \end{enumerate}
\end{lemma}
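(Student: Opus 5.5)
The plan is to build $f$ as an iterated lexicographic product of one small "Erd\H{o}s-type" colouring on $[s]$. The blocks along each row then add up across levels, while the sizes of homogeneous sets multiply.

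\emph{Base colouring.} First I will fix $g\colon[s]^2\to 2$ with no homogeneous set of size $2\log s$. This is Erd\H{o}s' probabilistic lower bound for diagonal Ramsey numbers, applied with $r=2\log s$, so that $2^{r/2}\approx s$. The expected number of homogeneous $r$-sets is $\binom{s}{r}2^{1-\binom r2}$. For $s\ge 4$ this is $<1$, because $\binom{s}{r}<s^r/r!$ and $2^{\binom r2}\approx s^{r-1}$, and $r!>2s$ for these values. Slightly more care is needed with the integer-valued $\log$, which is presumably why the hypothesis $s\ge 4$ appears. The counting is over a set of size $2^{\binom s2}$, which exists, so it can be carried out in $\EA$ as an ordinary finite counting argument: the number of bad colourings is less than the number of all colourings. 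Any $g$ on $[s]$ is trivially at most $s$-unstable, since each row has fewer than $s$ entries.

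\emph{Product.} Next I will identify $[m]=[s^t]$ with $[s]^t$ via base-$s$ digits, most significant digit first, so that the usual order on $[m]$ is the lexicographic order. For $x<y$, let $d$ be the first coordinate where they differ, and set $f(x,y)=g(x_d,y_d)$.

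\emph{Instability.} For fixed $x$, list the $y>x$ in increasing order. First come those whose first difference with $x$ is at coordinate $t-1$, then those with first difference at coordinate $t-2$, and so on up to coordinate $0$. Within a fixed $d$, $y_d$ runs through the values greater than $x_d$ in increasing order, and $f(x,y)=g(x_d,y_d)$ is constant while $y_d$ is fixed. Hence each $d$ contributes at most $s$ blocks, and $f(x,-)$ has at most $st=k$ blocks, which gives (a).

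\emph{No large homogeneous set.} Let $H$ be homogeneous for $f$ with colour $c$. The set of first coordinates occurring in $H$ is homogeneous for $g$ with colour $c$, so it has at most $2\log s-1$ elements. For each value $a$ of the first coordinate, $\{y\in H: y_0=a\}$ is homogeneous for the analogous $(t-1)$-fold product on the remaining coordinates. By induction on $t$, we get $|H|\le(2\log s-1)^t<\ell$, which gives (b).

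The induction statement is $\Delta_0$ in parameters bounded by $m$, namely "every homogeneous set for the $u$-fold product has size $\le(2\log s-1)^u$". Alternatively, one can avoid induction by a direct tree-counting argument: $H$ embeds into a tree of depth $t$ in which every node has at most $2\log s-1$ children.

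I expect the only real obstacle to be the base step: verifying that Erd\H{o}s' bound holds exactly with $r=2\log s$ for all $s\ge4$, given integer logarithms, and that the probabilistic argument formalizes in $\EA$. Both are routine once the arithmetic is checked for small $s$ separately if needed.
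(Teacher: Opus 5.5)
Your proposal is correct and is essentially the paper's proof: the paper's recursive splitting of $[m]$ into intervals $I_\sigma$, with pairs from distinct sub-intervals $I_{\sigma a},I_{\sigma b}$ coloured $g(a,b)$, is exactly your base-$s$ lexicographic product, and your per-level block count for (a) and your induction on nested intervals for (b) match the paper's. One small caveat on the base colouring, which the paper simply attributes to standard Ramsey lower bounds: the plain first-moment count does not work for every $s\ge 4$ (at $s=7$, where $2\log s=4$, it gives $\binom{7}{4}2^{-5}=35/32>1$), so that case needs an explicit colouring, e.g.\ the restriction of a witness for $R(4,4)=18$, which is the kind of small-case check you anticipated.
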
  
\begin{proof}
    Using the fact that $s \ge 4$ and standard lower bounds on Ramsey numbers (which are provable in $\EA$),
    fix some colouring $g$ on $[s]^2$ for which there is no homogeneous set of size $2\log s$.
    
    We define the colouring $f$ recursively in $t+1$ steps
    numbered $0,\ldots,t$.
    In each step $i$, we will divide $[m]$ into $s^i$ many intervals $I_\sigma$, each of length $s^{t-i}$, where the index $\sigma$ ranges over all $s$-ary sequences of length $i$.
    We will also define the colouring $f$ for pairs of elements that are in distinct intervals at this step but not at earlier steps.
    The initial stages of the construction are illustrated 
    in Figure \ref{fig:lb}.
    
    \begin{figure}
      \centering
      \begin{tikzpicture}[x=1.05cm,y=1.05cm,line cap=round,line join=round,>=Latex, scale=0.9, transform shape]

\def\REDOPACITY{0.6}

  \draw[black, thick] (0,0) -- (8,0);
  \foreach \x in {0,2,4,6,8}{
    \draw[black, thin] (\x,-0.14) -- (\x,0.14);
  }
\node at (1,-0.24) {$I_0$};
\node at (3,-0.24) {$I_1$};
\node at (5,-0.24) {$I_2$};
\node at (7,-0.24) {$I_3$};
\node[font=\large] at (7,2) {$I_\emptyset$};
  \foreach \a/\b in {0.42/1.58,2.42/3.58,4.42/5.58,6.42/7.58}{
    \draw[red][line width=1.2, draw opacity=0.6] (\a-0.1, 0.24) -- (\b+0.1, 0.24);
    \draw[blue][line width=1.2, draw opacity=1] (\a-0.1,-0.45) -- (\b+0.1,-0.45);
  }

  \foreach \x in {4.5,5.0,5.5}{
    \draw[black, thin] (\x,-0.07) -- (\x,0.07);
  }

  \draw[red, line width=0.8, draw opacity=\REDOPACITY] (1.00,0.24) to[out=80,in=100] (3.00,0.24);
  \draw[red, line width=0.8, draw opacity=\REDOPACITY] (3.00,0.24) to[out=80,in=100] (5.00,0.24);
  \draw[red, line width=0.8, draw opacity=\REDOPACITY] (1.00,0.24) to[out=90,in=90] (7.00,0.24);

  \draw[blue, line width=0.8] (1.00,-0.45) to[out=-80,in=-100] (5.00,-0.45);
  \draw[blue, line width=0.8] (3.00,-0.45) to[out=-80,in=-100] (7.00,-0.45);
  \draw[blue, line width=0.8] (5.00,-0.45) to[out=-80,in=-100] (7.00,-0.45);

    \draw[dashed, rounded corners=0.3cm] (3.8, -0.8) rectangle (6.2,0.8);

  \draw[dashed, rounded corners=0.55cm] (6.67,-4.3) rectangle (12.02,-1.7);
  
  \draw[dashed, rounded corners=0.1cm] (7.1,-3.6) rectangle (8.4,-2.7);

  \def\ixL{7.25}
  \def\ixR{11.35}
  \def\iy{-3.10}

  \draw[black, thick] (\ixL,\iy) -- (\ixR,\iy);
  \foreach \x in {\ixL,8.275,9.30,10.325,\ixR}{
    \draw[black, thin] (\x,\iy-0.14) -- (\x,\iy+0.14);
  }

  \foreach \a/\b in {7.52/8.03,8.55/9.06,9.58/10.09,10.61/11.12}{
    \draw[red][line width=1, draw opacity=0.6] (\a-0.05,\iy+0.20) -- (\b+0.05,\iy+0.20);
    \draw[blue][line width=1] (\a-0.05,\iy-0.35) -- (\b+0.05,\iy-0.35);
  }
\node[font=\footnotesize] at (7.52+0.255,\iy-0.18)  {$I_{20}$};
\node[font=\footnotesize] at (8.55+0.255,\iy-0.18) {$I_{21}$};
\node[font=\footnotesize] at (9.58+0.255,\iy-0.18) {$I_{22}$};
\node[font=\footnotesize] at (10.61+0.255,\iy-0.18) {$I_{23}$};
\node[font=\large] at (11.5, \iy+1) {$I_2$};
  \draw[red, line width=0.6, draw opacity=\REDOPACITY] (7.775,\iy+0.20) to[out=80,in=100] (8.805,\iy+0.20);
  \draw[red, line width=0.6, draw opacity=\REDOPACITY] (8.805,\iy+0.20) to[out=80,in=100] (9.835,\iy+0.20);
  \draw[red, line width=0.6, draw opacity=\REDOPACITY] (7.775,\iy+0.20) to[out=90,in=90] (10.865,\iy+0.20);

  \draw[blue, line width=0.6] (7.775,\iy-0.38) to[out=-80,in=-100] (9.835,\iy-0.38);
  \draw[blue, line width=0.6] (8.805,\iy-0.38) to[out=-90,in=-90] (10.865,\iy-0.38);
  \draw[blue, line width=0.6] (9.835,\iy-0.38) to[out=-90,in=-90] (10.865,\iy-0.38);

  \draw[-{To[length=3mm,width=2mm]}, thin, dashed] (6,-0.8) -- (6.9,-1.78);
  \draw[-{To[length=3mm,width=2mm]}, thin, dashed] (7.1,-3.3) -- (6,-3.8);
\node[font=\huge] at (5.4,-3.8) {$\dots$};

\end{tikzpicture}
      \caption{Proof of Lemma \ref{lem:SRT_lb}: construction of the colouring $f$, steps $1$ and $2$. In step $i$, we define the colouring for pairs of elements that belong to the same interval $I_\sigma$ of length $s^{t-i+1}$ but to distinct subintervals $I_{\sigma a}$, $I_{\sigma b}$ of length $s^{t-i}$.}
      \label{fig:lb}
    \end{figure}
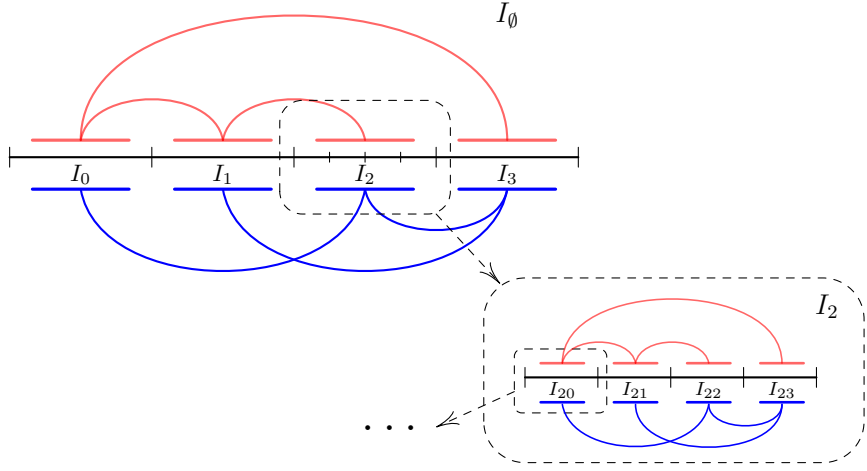
    
    In step $0$, we let $I_\emptyset$ be $[m] = \{0,\ldots,m-1\}$.

    Suppose that in step $i-1$, we have divided $[m]$ into $s^{i-1}$ 
    many intervals $I_{0\ldots0},\ldots, I_{(s-1)\ldots(s-1)}$, each of length $s^{t-i+1}$,
    and the colouring $f$ has already been defined for all pairs whose elements come from different intervals.
    We divide each interval $I_{\sigma}$ into $s$ many successive subintervals $I_{\sigma0}, \ldots, I_{\sigma(s-1)}$, each of length $s^{t-i}$.
    We colour pairs coming from two different subintervals $I_{\sigma a}, I_{\sigma b}$ of $I_{\sigma}$ using $g$: that is, if $x,y$ belong to $I_{\sigma a}$,
    $I_{\sigma b}$, respectively, where $a < b$, we
    set $f(x,y) := g(a,b)$.
    The colouring of pairs in which both elements come from the same interval $I_\tau$ with $|\tau| = i$ is left undefined at this stage.
    
    This completes the construction of $f$. Note that 
    the intervals $I_\sigma$ built in step $t$
    have length $s^0 = 1$, so after step $t$ no pair of distinct elements of $[m]$ is left without a colour.

    We now show that the colouring $f$ has property (a).
    Fix any $x \in [m]$, and for $i = 0,\ldots,t$ let $I_{\sigma_i}$ be the interval of length $s^{t-i}$ containing $x$ that appeared in step $i$ of the construction above.
    In this step, we define the value of $f(x,-)$ on positions corresponding to elements of 
    $I_{\sigma_{i-1}}\setminus I_{\sigma_i}$.
    Since we are actually colouring pairs based on which of $s$ distinct subintervals of $I_{\sigma_{i-1}}$ their elements belong to, the segment of $f(x,-)$ corresponding to $I_{\sigma_{i-1}}\setminus I_{\sigma_i}$ 
    consists of at most $s$ (in fact, at most $s-1$) blocks of successive 0's or successive 1's. So, altogether $f(x,-)$ consists of at most $st$ such blocks.

    It remains to show that $f$ has property (b).
    Let $H\subseteq [m]$ be homogeneous for $f$. We prove by induction on $i$ that, if $I_\sigma$ is an interval of length $s^i$ appearing in the construction, then $|H\cap I_\sigma|< (2\log s)^i$.
    As a direct consequence, $|H|=|H\cap I_\emptyset|<(2\log s)^t$.
    
    If $i=1$, then the colouring $f$ on $I_\sigma$ is exactly the same as $g$, so $|H\cap I_\sigma| < 2\log s$ by the choice of $g$.
    
    Now suppose $I_\sigma$ has length $s^{i+1}$.
    During the construction, we divided $I_\sigma$ into $s$ subintervals, and we coloured pairs consisting of elements from two distinct subintervals using $g$.
    So, fewer than $2\log s$ of the $s$ subintervals of $I_\sigma$ have non-empty intersection with $H$, because otherwise $H\cap I_\sigma$ would not be homogeneous. By induction, each subinterval contains fewer than $(2\log s)^i$ many elements in $H$, so $|H\cap I_\sigma| < (2\log s)^{i+1}$. 
\end{proof}

\begin{corollary}\label{cor:srttt-qpoly-lb}
    $(\EA)$ For any  $k\in\IN$ larger than some fixed standard constant, there exists a colouring 
    $f \colon {[k^{(\log k)^{0.9}}]}^2\rightarrow 2$ that is at most $k$-unstable but has no homogeneous set of size $k$.
\end{corollary}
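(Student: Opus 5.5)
We apply Lemma~\ref{lem:SRT_lb} with parameters chosen so that the instability bound $st$ is at most $k$ and the homogeneity bound $(2\log s)^t$ is at most $k$. We then restrict the resulting colouring to an initial segment of the required length.

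The parameters are as follows. Given $k$, set $s := 2^{\sqrt{\log k}}$, so $\log s = \sqrt{\log k}$, and let $t$ be the integer part of
\[\frac{\log k}{\log(2\log s)} = \frac{\log k}{1 + \tfrac12\log\log k}.\]
Then $(2\log s)^t \le k$, so $\ell \le k$. Also $st = 2^{\sqrt{\log k}}\cdot O(\log k/\log\log k)$, which is at most $k$ once $k$ exceeds a standard constant. Finally,
\[\log m = t\log s \approx \frac{(\log k)^{3/2}}{\tfrac12\log\log k}.\]
Comparing this with $\log\bigl(k^{(\log k)^{0.9}}\bigr) = (\log k)^{1.9}$ shows a problem: $(\log k)^{1.5}$ is much smaller than $(\log k)^{1.9}$, so this choice is too weak.

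So $s$ must be taken larger. We need $t\log s \ge (\log k)^{1.9}$ subject to $t\log(2\log s)\le\log k$ and $st\le k$. Writing $L=\log k$ and taking $\log s = L^{0.95}$, we get $t \approx L/(0.95\log L)$ and
\[t\log s \approx \frac{L^{1.95}}{0.95\log L} \ge L^{1.9}\]
for large $L$. The instability bound also holds: $st \le 2^{L^{0.95}}\cdot L \le 2^L = k$ for $k$ large. Hence with $s = 2^{\lfloor(\log k)^{0.95}\rfloor}$ and $t = \lfloor \log k/\log(2\log s)\rfloor$, Lemma~\ref{lem:SRT_lb} gives a colouring on $[s^t]^2$ that is at most $k$-unstable and has no homogeneous set of size $(2\log s)^t \le k$. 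In particular it has none of size $k$, since a homogeneous set of size $k$ would contain one of size $\ell$. Moreover $s^t \ge k^{(\log k)^{0.9}}$.

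Both properties pass to restrictions. Restricting $f$ to the initial segment $[k^{(\log k)^{0.9}}]$ preserves being at most $k$-unstable, because each sequence $f(x,-)$ only gets truncated. It also obviously preserves the absence of homogeneous sets of size $k$.

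All of this is formalizable in $\EA$: the quantities involved are at most polynomially larger than $k^{(\log k)^{0.9}}$, which exists by $\exp$, and the lemma itself is stated in $\EA$.

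The main obstacle is the parameter bookkeeping, since with integer-valued logarithms the comparisons only hold approximately. The point is that every inequality needed has slack of a factor like $L^{0.05}/\log L$ or $2^{L-L^{0.95}}$, so rounding errors are absorbed once $k$ exceeds a standard threshold. I would verify each inequality with explicit crude bounds, for instance $\log(2\log s)\ge 0.95\log L$ and $t\ge L/(\log L+1)-1$, rather than by asymptotics.
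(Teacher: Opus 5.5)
Your proposal is correct and follows the paper's route: instantiate Lemma~\ref{lem:SRT_lb}, check $st\le k$, $(2\log s)^t\le k$ and $s^t\ge k^{(\log k)^{0.9}}$ for large $k$, and restrict to the initial segment, which is harmless since both properties survive restriction. The paper simply takes $s=k^{0.9}$ and $t=(\log k)^{0.95}$, which makes all three inequalities one-liners; your $s=2^{\lfloor(\log k)^{0.95}\rfloor}$ with $t$ maximal subject to $(2\log s)^t\le k$ also works, provided $t$ is defined exactly that way rather than through the paper's integer-valued $\log$ in the denominator, since rounding there could push $(2\log s)^t$ above $k$.
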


\begin{proof}
    Let $t=(\log k)^{0.95}$ and let $s$ be $k^{0.9}$ so that $\log s=0.9\log k$.
    Then, assuming $k$ is sufficiently large,
    \begin{align*}
        st & \le k^{0.9} \log k \le k,\\
        (2\log s)^t & \le 2^{(\log \log k+1)(\log k)^{0.95}}\leq  k,\\
        s^t & = 2^{0.9(\log k)^{1.95}}\geq 2^{(\log k)^{1.9}} = k^{(\log k)^{0.9}}.    
    \end{align*}

We obtain the required colouring $f$ by applying Lemma~\ref{lem:SRT_lb} to $s$ and $t$ as specified above.
\end{proof}

Corollary \ref{cor:srttt-qpoly-lb} tells us that 
there is a nontrivial, namely quasipolynomial, lower bound on Ramsey numbers even for the restricted situation in which we only consider somewhat stable colourings: more precisely, when looking for a 
homogeneous set of size $k$, we consider only colourings that are at most $k$-unstable. 
Our aim is now to show that there is a corresponding quasipolynomial upper bound as well. 

There is a well-known class of colourings for which  Ramsey numbers are at most quadratic. This class was given a definition in \cite{erdos-hajnal} in graph-theoretic terminology, which is probably more natural in this context, but we reformulate it in terms
of sets and colourings to maintain notational uniformity.

\begin{definition}[Erd\"{o}s--Hajnal~\cite{erdos-hajnal}]
A colouring $f \colon [S]^2 \to 2$ for a finite $S \subseteq \IN$ is \emph{very simple} if it belongs to the smallest class satisfying the following conditions:

\begin{enumerate}[(a)]
    \item if $S$ has one element, then the unique colouring of $[S]^2$ is very simple,
    \item if $f_1 \colon [S_1]^2 \to 2$ and 
    $f_2 \colon [S_2]^2 \to 2$ are both very simple, then each of the two colourings $g_0, g_1$ of $[S_1 \cup S_2]^2$ is very simple, where $g_b$ acts as $f_i$ on $[S_i]^2$ and has constant value $b$ on all pairs with one element in $S_1$ and one in $S_2$.
\end{enumerate}

\end{definition}

The following lemma is stated and proved 
(of course, in terms of graphs and without mention of provability in $\EA$) 
in \cite{erdos-hajnal}, where it is referred to
as well-known.

\begin{lemma}[{\cite[Lemma 1.4]{erdos-hajnal}}]\label{lem:vsgraph}
    $(\EA)$ If $f: [S]^2 \to 2$ is a very simple colouring, then there is a homogeneous set for $f$
    of size at least $\sqrt{|S|}$.
\end{lemma}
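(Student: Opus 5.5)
We argue by induction along the recursive construction of the very simple colouring. To keep the induction in $\EA$, we strengthen the statement to a product form: for every very simple $f\colon[S]^2\to 2$ there are a $0$-homogeneous set $H_0$ and a $1$-homogeneous set $H_1$ (either may be a singleton) with $|H_0|\cdot|H_1|\ge |S|$. Since $\max(|H_0|,|H_1|)^2 \ge |H_0|\cdot|H_1| \ge |S|$, this gives a homogeneous set of size at least $\sqrt{|S|}$.

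For the induction step, suppose $f = g_b$ is obtained from very simple $f_1$ on $S_1$ and $f_2$ on $S_2$. By the induction hypothesis we have pairs $(H^1_0,H^1_1)$ and $(H^2_0,H^2_1)$ with $|H^1_0||H^1_1|\ge|S_1|$ and $|H^2_0||H^2_1|\ge|S_2|$. Say $b=0$; the case $b=1$ is symmetric. Every cross pair has colour $0$, so $H^1_0\cup H^2_0$ is $0$-homogeneous for $g_0$. For colour $1$ we take whichever of $H^1_1, H^2_1$ is larger. Writing $x_i=|H^i_0|$ and $y_i=|H^i_1|$, and assuming $y_1\ge y_2$ without loss of generality, we get
\[(x_1+x_2)\,y_1 \ge x_1y_1+x_2y_2 \ge |S_1|+|S_2| = |S|.\]
The base case, a singleton $S$, is trivial: take $H_0=H_1=S$.

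The only real obstacle is formalization in $\EA$. Very simple colourings are defined as the smallest class closed under the two operations, so in $\EA$ "very simple" should mean that there exists a finite witnessing decomposition tree. Such a tree has $2|S|-1$ nodes and so has code of size exponential in a polynomial in $|S|$, which $\EA$ can handle. We then prove the strengthened statement by $\Delta_0(\exp)$-induction over the nodes of the tree, bottom-up. Concretely, we show by induction on $n$ that every subtree with at most $n$ leaves admits a pair $(H_0,H_1)$ as above. The pair can be bounded by $2^{2^{|S|}}$, so the induction formula is bounded, and $\ind\Delta_0$ together with $\exp$ suffices.
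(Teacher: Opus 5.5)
Your proof is correct and is the same as the paper's. The paper also strengthens the claim to ``the product of the sizes of the largest $0$-homogeneous and $1$-homogeneous sets is at least $|S|$'' and proves it by induction on $|S|$ with exactly the case analysis you give.

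One small slip in the formalization: $2^{2^{|S|}}$ is the wrong bound for the codes of $H_0,H_1$, since it fails when $S$ consists of large numbers. Bound them instead by the code of $S$ (a subset of $\Ack{c}$ has code at most $c$); with that bound the induction formula is still $\Delta_0$ in the parameters, so the argument goes through unchanged.
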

\begin{proof}
    It is proved by induction on $|S|$ that
    the product of the size of the largest homogeneous set with colour $0$ and the size of the largest homogeneous set with colour $1$ must be at least $|S|$. The inductive step is a straightforward case analysis that easily formalizes in $\EA$. 
\end{proof}

To prove our upper bound on Ramsey
numbers for stable colourings, we will show that
if a colouring is at most $k$-unstable but defined on a set of size considerably larger than $k$, then we can inductively construct a reasonably big subset on which the colouring is very simple. The following lemma provides the core of the inductive step in that construction.

\begin{lemma}\label{lem:grouping}
    $(\EA)$ Let $c\geq 2$, and let $k$ be larger than some fixed standard constant.
    Then for any $A\subseteq \IN$ with $|A|=k^c$ the following holds: if $f\colon [A]^2\imp 2$ is at most $k$-unstable, then there exist some $A_0, A_1\subseteq A$ with $\max A_0 < \min A_1$ such that $|A_0|=|A_1|= k^{c-2}$ and $f\restdto {A_0\times A_1}$ is constant.
    \end{lemma}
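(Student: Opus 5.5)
We split $A$ in the middle: let $L$ be the first $k^c/2$ elements of $A$ and $R$ the last $k^c/2$. Then we partition $R$ into consecutive blocks $B_1,\ldots,B_N$ of size $k^{c-2}$. There are $N = k^2/2$ such blocks. For $x\in L$, the sequence $f(x,-)$ restricted to $R$ changes value at most $k-1$ times, because $f$ is at most $k$-unstable. So at most $k-1$ of the blocks $B_r$ contain a change point for $x$, in the sense that $f(x,\cdot)$ is non-constant on $B_r$. Call $x$ \emph{good} for $B_r$ if $f(x,\cdot)$ is constant on $B_r$.

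The counting step is a double count of pairs $(x,r)$ with $x\in L$ and $x$ bad for $B_r$. There are at most $|L|(k-1)$ such pairs. Averaging over the $N=k^2/2$ blocks, some block $B_r$ has at most
\[
\frac{|L|(k-1)}{k^2/2} < \frac{2|L|}{k}
\]
bad elements of $L$. Hence at least $|L|(1-2/k)$ elements of $L$ are good for this $B_r$. Each good $x$ has a constant colour $b_x\in\{0,1\}$ on $B_r$. By pigeonhole, at least $|L|(1-2/k)/2 = k^c(1-2/k)/4$ good elements share the same colour $b$. Since $c\ge 2$ and $k$ exceeds a fixed constant, this is at least $k^{c-2}$; indeed it is far larger. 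We take $A_0$ to be any $k^{c-2}$ of these elements and $A_1 := B_r$. Then $\max A_0 < \min A_1$ because $A_0\subseteq L$ and $A_1\subseteq R$, and $f(x,y)=b$ for all $x\in A_0$, $y\in A_1$.

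All of this is finite counting on $A$-indexed objects of size polynomial in $|A|$, so it formalizes in $\EA$. The averaging and pigeonhole arguments are $\Delta_0(\exp)$ statements about sums over $[N]$ and $L$, which $\ind\Delta_0+\exp$ proves.

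The main obstacle is arithmetic bookkeeping rather than a conceptual gap. $k^c/2$ and $k^2/2$ may not be integers, and $c$ may not be an integer, in which case $k^{c}$ is read with rounding. We handle this by taking $L$ and $R$ of size $\lfloor k^c/2\rfloor$, using $\lfloor |R|/k^{c-2}\rfloor \ge k^2/2-1$ blocks, and discarding leftovers. The slack in the final bound, roughly $k^c/4$ versus the required $k^{c-2}$, absorbs all rounding once $k$ is larger than a fixed standard constant. The bad-count step, which would be $|L|(k-1)/(k^2/2-1)$ with rounding, stays below $|L|/2$.
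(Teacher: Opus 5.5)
Your proof is correct and follows essentially the paper's route: choose $A_1$ among length-$k^{c-2}$ intervals to the right by averaging, so that most left-hand elements $x$ have $f(x,-)$ constant on it, then pigeonhole on the colour. The only differences are cosmetic. The paper takes $A_0'=[4k^{c-2}]$ and averages over all overlapping subintervals of the remainder, paying a factor $|I|-1$ per change point. Your disjoint blocks and half-split instead make the double count exact, and leave far more slack than the lemma needs.
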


\begin{proof}
    We identify $A$ with $[k^c]$.
    Let $A_0' = [4k^{c-2}]$ and $A_1' = [k^c]\setminus A_0'$. 
    
    Assume that we sample a subinterval $I\subseteq A_1'$ of length $k^{c-2}$ uniformly at random. For each fixed $x\in A_0'$, let $E_{x,I}$ be the event ``$f(x,-)$ does not change value within $I$''.
    Then 
    \[1 - \Pr_{I}(E_{x,I}) \leq \frac{k(|I|-1)}{|A_1'|-|I|+1} \leq\frac{k^{c-1}}{k^c - 5k^{c-2}}\leq \frac{1}{2}.\]
    
    The first inequality above holds because if a given point at which $f(x,-)$ changes value lies within $I$, then the left end of $I$ must lie in one of at most $|I|-1$ specific positions among all the  $|A_1'|-|I|+1$ possible ones. The factor $k$ in the numerator corresponds to taking the union bound over at most $k$ points at which $f(x,-)$ changes. 
    
    So, $\Pr_{I}(E_{x,I})\geq 1/2$ for each fixed $x \in A_0'$. Thus, if we randomly choose \emph{both}
    $x \in A_0'$ and the interval $I$, we have 
    $\Pr_{x,I}(E_{x,I})\geq \frac{1}{2}$.
    Let $A_1$ be the subinterval $I$ of $A_1'$ that maximizes $\Pr_{x}(E_{x,I})$. By a routine averaging argument, we have
    \[\Pr_{x}(E_{x,A_1})\geq \frac{1}{2}.\]
    
    Let $A_0''$ be the subset of $A_0'$ such that $x\in A_0''$ iff $E_{x, A_1}$ holds.
    Then by the lower bound on $\Pr_{x}(E_{x,A_1})$ above, we have $|A_0''|\geq 2k^{c-2}$.
    
    Now, $f\restd_{\{x\}\times A_1}$ is a constant for each $x\in A_0''$. We choose $b \in \{0,1\}$ such that
    \[\{x\in A_0''\mid f\restd_{\{x\}\times A_1} \equiv b\}|\geq |\{x\in A_0''\mid f\restd_{\{x\}\times A_1} \equiv 1-b\}|\]
    and let $A_0 = \{x\in A_0''\mid f\restd_{\{x\}\times A_1} \equiv b\}$.
    
    Then $f\restd_{A_0\times A_1}$ has constant value $b$ and $|A_0| \ge k^{c-2}$ by the choice of $b$. Finally, we may leave out some elements of $A_0$ so as to ensure $|A_0|=|A_1|=k^{c-2}$.
\end{proof}

\begin{theorem}\label{thm:upperbound}
    $(\EA)$ Let $d \geq 1$, and let $k\in\IN$ be larger than some fixed standard constant. 
    If $f\colon [k^{2d}]^2\imp 2$ is at most $k$-unstable, then there is a homogeneous set $H$ for $k$ with $|H| \ge 2^{d/2}$.
\end{theorem}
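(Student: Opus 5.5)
We build a very simple subcolouring by repeated use of Lemma~\ref{lem:grouping}, and then apply Lemma~\ref{lem:vsgraph}. ("Homogeneous set for $k$" in the statement is read as "homogeneous set for $f$".)

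The plan is to prove, by induction on $d$, the following claim. If $A$ has $|A| = k^{2d}$ and $f\colon [A]^2\to 2$ is at most $k$-unstable, then there is $B\subseteq A$ with $|B| \ge 2^d$ on which $f$ is very simple. Instability is inherited by restrictions: removing points from $A$ can only merge blocks of $f(x,-)$. So every restriction of $f$ is again at most $k$-unstable.

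For $d=1$, any two-element set gives a very simple colouring, since two singletons glued with a constant colour form a very simple colouring. For the inductive step, suppose $|A| = k^{2(d+1)}$. Apply Lemma~\ref{lem:grouping} with $c = 2d+2 \geq 2$. This yields $A_0,A_1\subseteq A$ with $\max A_0<\min A_1$ and $|A_0|=|A_1|=k^{2d}$, such that $f$ is constant on $A_0\times A_1$. By the induction hypothesis applied to $f\restdto{[A_0]^2}$ and $f\restdto{[A_1]^2}$, we get $B_0\subseteq A_0$ and $B_1\subseteq A_1$, each of size at least $2^d$, on which $f$ is very simple. Since $B_0\times B_1\subseteq A_0\times A_1$, the colour on pairs between $B_0$ and $B_1$ is constant. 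By clause (b) of the definition of very simple, $f$ is therefore very simple on $B_0\cup B_1$, which has size at least $2^{d+1}$.

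Lemma~\ref{lem:vsgraph} applied to $B$ then gives a homogeneous set of size at least $\sqrt{2^d} = 2^{d/2}$. This set is homogeneous for $f$ on $A$, because homogeneity depends only on pairs inside it.

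The main obstacle is formalizing the induction in $\EA$. The claim contains a quantifier over subsets $B$ of $A$ and over very simple decompositions. Once $k$ and $d$ are fixed, however, everything is bounded by a number around $2^{k^{2d}}$, which exists by $\exp$. So the claim is $\Delta_0(\exp)$ with parameters bounded in terms of $k^{2d}$, and $\ind\Delta_0$ applies. To make this precise, I would witness very simplicity by a bounded code of a binary decomposition tree, fix the ambient set $[k^{2d}]$, and induct on $e\le d$ over all subsets $A$ of size $k^{2e}$ inside that bounded universe. The requirement that $k$ exceed a standard constant is inherited from Lemma~\ref{lem:grouping}, which is applied uniformly at each step.
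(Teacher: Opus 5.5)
Your proposal is correct and is essentially the paper's proof. The paper first builds the full tree $(A_\sigma)_{\sigma\in 2^{\le d}}$ by iterating Lemma~\ref{lem:grouping} with exponent $2d-2|\sigma|$, which is what your choice $c=2d+2$ amounts to. It then shows by reverse induction on $|\sigma|$ that $f$ is very simple on the union of the leaves below $\sigma$, and finishes with Lemma~\ref{lem:vsgraph}. Folding the construction and the gluing into one induction on $d$, quantified over all sets of size $k^{2d}$, is only an organizational difference, and your bounded-coding remark suffices to make that induction an instance of $\ind\Delta_0$.
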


\begin{proof}[Proof of Theorem~\ref{thm:upperbound}]
    By Lemma \ref{lem:vsgraph}, it is enough to show
    that there exists a set $A \subseteq [k^{2d}]$
    of size $2^{d}$ such that $f\restdto{[A]^2}$ 
    is very simple. To achieve this, we apply Lemma~\ref{lem:grouping} iteratively.
    For each string $\sigma\in 2^{\le d}$, we construct a set $A_\sigma\subseteq [k^{2d}]$ with $|A_\sigma| = k^{2d - 2|\sigma|}$ as follows:
    \begin{enumerate}[(i)]
        \item $A_\emptyset = [k^{2d}]$.
        \item If $A_\sigma$ is already defined for some $\sigma$ with $|\sigma| = d'<d$, then we obtain $A_{\sigma 0}$ and $A_{\sigma 1}$ by applying Lemma~\ref{lem:grouping} with $c := d'$ to
        $f\restdto{[A_\sigma]^2}$.
        Note that $f\restdto{[A_\sigma]^2}$ is 
        at most $k$-unstable as it is a restriction of $f$.
    \end{enumerate}
    
    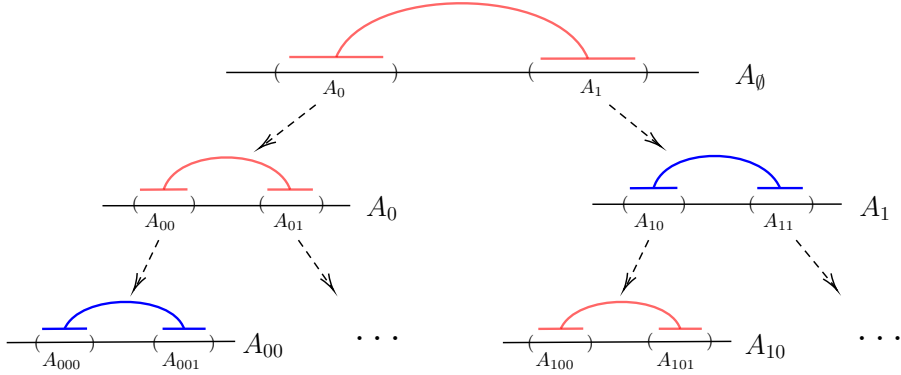
\begin{figure}
        \centering
        \tikzset{every picture/.style={line width=0.75pt}} 
\resizebox{1\textwidth}{!}{%
\begin{tikzpicture}[x=0.8pt,y=0.75pt,yscale=-1,xscale=1]

\def\REDOPACITY{0.6}

\draw    (160,70) -- (461,70) ;
\draw    (81.2,160.6) -- (239,160.5) ;
\draw [color=red  ,draw opacity=\REDOPACITY ][line width=1.2]   (230.08,59.5) .. controls (249.2,10.6) and (371.2,10.6) .. (390.35,60.56) ;
\draw [color=red  ,draw opacity=\REDOPACITY ][line width=1.2]   (120.2,149.6) .. controls (129,120.5) and (191,120.5) .. (200.7,149.6) ;
\draw  [dash pattern={on 4pt off 3pt}]  (217,92) -- (182.67,120.4) ;
\draw [shift={(181,121.5)}, rotate = 324.48] [color={rgb, 255:red, 0; green, 0; blue, 0 }  ][line width=0.75]    (10.93,-3.29) .. controls (6.95,-1.4) and (3.31,-0.3) .. (0,0) .. controls (3.31,0.3) and (6.95,1.4) .. (10.93,3.29)   ;
\draw  [dash pattern={on 4pt off 3pt}]  (404,92) -- (435.42,119.28) ;
\draw [shift={(437,120.5)}, rotate = 217.69] [color={rgb, 255:red, 0; green, 0; blue, 0 }  ][line width=0.75]    (10.93,-3.29) .. controls (6.95,-1.4) and (3.31,-0.3) .. (0,0) .. controls (3.31,0.3) and (6.95,1.4) .. (10.93,3.29)   ;
\draw  [dash pattern={on 4pt off 3pt}]  (206,184) -- (227.92,217.31) ;
\draw [shift={(228.5,218.31)}, rotate = 237.43] [color={rgb, 255:red, 0; green, 0; blue, 0 }  ][line width=0.75]    (10.93,-3.29) .. controls (6.95,-1.4) and (3.31,-0.3) .. (0,0) .. controls (3.31,0.3) and (6.95,1.4) .. (10.93,3.29)   ;
\draw [color=red  ,draw opacity=\REDOPACITY ][line width=1.2]    (200.16,59.5) -- (260,59.5) ;
\draw [color=red  ,draw opacity=\REDOPACITY ][line width=1.2]    (360.2,60.6) -- (403.2,60.54) -- (420.5,60.52) ;
\draw [color=red  ,draw opacity=\REDOPACITY ][line width=1.2]    (105.07,149.71) -- (135.33,149.49) ;
\draw [color=red  ,draw opacity=\REDOPACITY ][line width=1.2]    (186.2,149.6) -- (215.2,149.6) ;
\draw  [dash pattern={on 4pt off 3pt}]  (117,184) -- (101.93,217.23) ;
\draw [shift={(100.93,218.23)}, rotate = 297.82] [color={rgb, 255:red, 0; green, 0; blue, 0 }  ][line width=0.75]    (10.93,-3.29) .. controls (6.95,-1.4) and (3.31,-0.3) .. (0,0) .. controls (3.31,0.3) and (6.95,1.4) .. (10.93,3.29)   ;
\draw  [dash pattern={on 4pt off 3pt}]  (429,184) -- (413.87,217.2) ;
\draw [shift={(413,218)}, rotate = 298.94] [color={rgb, 255:red, 0; green, 0; blue, 0 }  ][line width=0.75]    (10.93,-3.29) .. controls (6.95,-1.4) and (3.31,-0.3) .. (0,0) .. controls (3.31,0.3) and (6.95,1.4) .. (10.93,3.29)   ;
\draw  [dash pattern={on 4pt off 3pt}]  (523,184) -- (548.84,217.37) ;
\draw [shift={(549,218)}, rotate = 234.61] [color={rgb, 255:red, 0; green, 0; blue, 0 }  ][line width=0.75]    (10.93,-3.29) .. controls (6.95,-1.4) and (3.31,-0.3) .. (0,0) .. controls (3.31,0.3) and (6.95,1.4) .. (10.93,3.29)   ;
\draw    (393.2,159.6) -- (552,159.5) ;
\draw [color=blue  ,draw opacity=1 ][line width=1.2]   (432.2,148.6) .. controls (440,119.5) and (501,119.5) .. (512.7,148.6) ;
\draw [color=blue  ,draw opacity=1 ][line width=1.2]    (417.07,148.71) -- (447.33,148.49) ;
\draw [color=blue  ,draw opacity=1 ][line width=1.2]    (498.2,148.6) -- (527.2,148.6) ;
\draw    (20.2,253.66) -- (165.71,252.82) ;
\draw [color=blue  ,draw opacity=1 ][line width=1.2]   (57.07,244.24) .. controls (67.28,219.92) and (124.95,219.92) .. (133.17,244.24) ;
\draw [color=blue  ,draw opacity=1 ][line width=1.2]    (42.76,244.34) -- (71.38,244.14) ;
\draw [color=blue  ,draw opacity=1 ][line width=1.2]    (119.47,244.24) -- (146.88,244.24) ;
\draw    (336.2,253.66) -- (481.71,252.82) ;
\draw [color=red  ,draw opacity=\REDOPACITY ][line width=1.2]   (373.07,244.24) .. controls (383.28,219.92) and (440.95,219.92) .. (449.17,244.24) ;
\draw [color=red  ,draw opacity=\REDOPACITY ][line width=1.2]    (358.76,244.34) -- (387.38,244.14) ;
\draw [color=red  ,draw opacity=\REDOPACITY ][line width=1.2]    (435.47,244.24) -- (462.88,244.24) ;

\draw (482.49,63.39) node [anchor=north west][inner sep=0.75pt]   [align=left][font=\Large] {$\displaystyle A_{\emptyset }$};
\draw (219.89,73.83) node [anchor=north west][inner sep=0.75pt]   [align=left][font=\normalsize] { $\displaystyle A_{0}$};
\draw (383.81,73.37) node [anchor=north west][inner sep=0.75pt]   [align=left] {$\displaystyle A_{1}$};
\draw (106.65,164.5) node [anchor=north west][inner sep=0.75pt]  [font=\normalsize] [align=left] {$\displaystyle A_{00}$};
\draw (188.49,164.5) node [anchor=north west][inner sep=0.75pt]  [font=\normalsize] [align=left] {$\displaystyle A_{01}$};
\draw (418.12,164.5) node [anchor=north west][inner sep=0.75pt]  [font=\normalsize] [align=left] {$\displaystyle A_{10}$};
\draw (500.63,164.5) node [anchor=north west][inner sep=0.75pt]  [font=\normalsize] [align=left] {$\displaystyle A_{11}$};
\draw (247.78,153.04) node [anchor=north west][inner sep=0.75pt]   [align=left][font=\Large] {$\displaystyle A_{0}$};
\draw (561.92,152.82) node [anchor=north west][inner sep=0.75pt]   [align=left][font=\Large] {$\displaystyle A_{1}$};
\draw (41.46,259) node [anchor=north west][inner sep=0.75pt]  [font=\normalsize] [align=left] {$\displaystyle A_{000}$};
\draw (117.62,259) node [anchor=north west][inner sep=0.75pt]  [font=\normalsize] [align=left] {$\displaystyle A_{001}$};
\draw (168.96,245.85) node [anchor=north west][inner sep=0.75pt]  [font=\Large] [align=left] {$\displaystyle A_{00}$};
\draw (188.49,62.23) node [anchor=north west][inner sep=0.75pt]   [align=left] {$\displaystyle ($};
\draw (263.95,62.23) node [anchor=north west][inner sep=0.75pt]   [align=left] {$\displaystyle )$};
\draw (353.98, 70) node  [xslant=-0.02] [align=left] {$\displaystyle ($};
\draw (423.5,62.22) node [anchor=north west][inner sep=0.75pt]   [align=left] {$\displaystyle )$};
\draw (98.41,152.5) node [anchor=north west][inner sep=0.75pt]  [font=\small] [align=left] {$\displaystyle ($};
\draw (135.46,152.5) node [anchor=north west][inner sep=0.75pt]  [font=\small] [align=left] {$\displaystyle )$};
\draw (178.92,152.43) node [anchor=north west][inner sep=0.75pt]  [font=\small] [align=left] {$\displaystyle ($};
\draw (217.2,152.5) node [anchor=north west][inner sep=0.75pt]  [font=\small] [align=left] {$\displaystyle )$};
\draw (410.41,152.5) node [anchor=north west][inner sep=0.75pt]  [font=\small] [align=left] {$\displaystyle ($};
\draw (447.46,152.5) node [anchor=north west][inner sep=0.75pt]  [font=\small] [align=left] {$\displaystyle )$};
\draw (490.92,152.43) node [anchor=north west][inner sep=0.75pt]  [font=\small] [align=left] {$\displaystyle ($};
\draw (529.2,152.6) node [anchor=north west][inner sep=0.75pt]  [font=\small] [align=left] {$\displaystyle )$};
\draw (36.17,246.5) node [anchor=north west][inner sep=0.75pt]  [font=\small] [align=left] {$\displaystyle ($};
\draw (71.2,246.5) node [anchor=north west][inner sep=0.75pt]  [font=\small] [align=left] {$\displaystyle )$};
\draw (111.28,246.5) node [anchor=north west][inner sep=0.75pt]  [font=\small] [align=left] {$\displaystyle ($};
\draw (147.47,246.5) node [anchor=north west][inner sep=0.75pt]  [font=\small] [align=left] {$\displaystyle )$};
\draw (355.46,259) node [anchor=north west][inner sep=0.75pt]  [font=\normalsize] [align=left] {$\displaystyle A_{100}$};
\draw (432.62,259) node [anchor=north west][inner sep=0.75pt]  [font=\normalsize] [align=left] {$\displaystyle A_{101}$};
\draw (488.96,246.5) node [anchor=north west][inner sep=0.75pt]  [font=\Large] [align=left] {$\displaystyle A_{10}$};
\draw (350.17,246.6) node [anchor=north west][inner sep=0.75pt]  [font=\small] [align=left] {$\displaystyle ($};
\draw (388.2,246.56) node [anchor=north west][inner sep=0.75pt]  [font=\small] [align=left] {$\displaystyle )$};
\draw (426.28,245.61) node [anchor=north west][inner sep=0.75pt]  [font=\small] [align=left] {$\displaystyle ($};
\draw (464.47,245.76) node [anchor=north west][inner sep=0.75pt]  [font=\small] [align=left] {$\displaystyle )$};
\draw (240,248) node [anchor=north west][inner sep=0.75pt]  [font=\huge] [align=left] {$\displaystyle \dotsc $};
\draw (560,248) node [anchor=north west][inner sep=0.75pt]  [font=\huge] [align=left] {$\displaystyle \dotsc $};
\end{tikzpicture}
}
        \caption{Proof of Theorem \ref{thm:upperbound}: the construction of $A_\sigma$ for $|\sigma|\leq 2$. Note that in general, $A_{\sigma0}$ is not a convex subset of $A_\sigma$.}
        \label{pic:2}
    \end{figure}
    
The construction is illustrated in Figure~\ref{pic:2}.    
We then have the following for any $\sigma\in 2^{\le d}$:

    \begin{itemize}
        \item $|A_\sigma|=k^{2d-2|\sigma|}$,
        \item if $|\sigma| < d$, then $A_\sigma\supseteq A_{\sigma  i}$ for $i\in\{0,1\}$,
        and $\max A_{\sigma 0} < \min A_{\sigma 1}$,
        \item if $|\sigma| < d$, then $f\restd_{A_{\sigma 0}\times A_{\sigma 1}}$ is constant.
    \end{itemize}
    
        We can prove by reverse induction on $|\sigma|$ 
    that $f$ restricted to pairs from the set
    \[\bigcup_{\tau \in 2^d \colon \sigma \sqsubseteq \tau} A_\tau\]
    is very simple. The induction is also very simple: the base step, for $|\sigma| = d$, is item (a) from the definition of very simple colourings, and the induction step is item (b).
    
    For $\sigma = \emptyset$, we conclude that $f$ restricted to $\left[\bigcup_{\tau \in 2^d} A_\tau\right]^2$ is very simple. 
    But $|\bigcup_{\tau \in 2^d} A_\tau| = 2^d$.
\end{proof}

\begin{corollary}\label{cor:srttt-qpoly-ub}
    $(\EA)$ For any $k \in \IN$ larger than some fixed standard constant, 
    every $f\colon[k^{4\log k}]^2\imp 2$ that is
    at most $k$-unstable 
    has a homogeneous subset of cardinality $k$. 
\end{corollary}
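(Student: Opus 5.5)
This is a direct instantiation of Theorem~\ref{thm:upperbound} with $d := 2\log k$. With this choice the domain of the colouring is $[k^{2d}] = [k^{4\log k}]$, exactly the set in the statement. The size guaranteed by the theorem is then
\[
2^{d/2} = 2^{\log k} = k.
\]

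The plan is as follows. Fix $k$ larger than the standard constant required by Theorem~\ref{thm:upperbound}, and also large enough that $d = 2\log k \ge 1$. Take an at most $k$-unstable colouring $f\colon[k^{4\log k}]^2\to 2$. Apply Theorem~\ref{thm:upperbound} to $f$ with $d := 2\log k$. This yields a homogeneous set $H$ with $|H| \ge 2^{d/2} = 2^{\log k}$.

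The main point to handle is the integer-valued convention for $\log$ stated in the Preliminaries. With that convention, $2^{\log k} \le k$, possibly with strict inequality, so a little care is needed.

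\begin{itemize}
\item One option is to replace $d$ by $2\lceil \log_2 k\rceil$. Then $2^{d/2} \ge k$, but the domain needed becomes $[k^{2d}]$, which may be slightly larger than $[k^{4\log k}]$.
\item The cleaner fix is to look inside the proof of Theorem~\ref{thm:upperbound}. It builds $2^d$ blocks of sizes $k^{2d-2|\sigma|}$ and then extracts a homogeneous set of size $\sqrt{2^d}$ via Lemma~\ref{lem:vsgraph}.
\end{itemize}

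Since $\log k$ only undershoots $\log_2 k$ by less than one, we have $2^{\log k} > k/2$. The factor-$2$ loss can be absorbed by the slack in the unused parameters. Lemma~\ref{lem:grouping} only needs $|A| = k^c$ and loses $k^2$ per step, far more than needed. So one can run the construction with $k$ replaced by a slightly smaller value $k' \le k$, where every at most $k$-unstable colouring is handled as long as the shrinking factor in Lemma~\ref{lem:grouping} stays below $k^2$.

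Alternatively, and more simply, take $d = 2\log k + 2$ and observe the following. At the last two levels of the recursion the set sizes are tiny, and one may stop the recursion early. Lemma~\ref{lem:vsgraph} applied to a very simple colouring on $2^{2\log k}$ points already gives $2^{\log k}$. The extra one or two halvings needed to reach $k$ can be obtained by one more application of Lemma~\ref{lem:grouping} on the leftover factor $k^{4\log k}/k^{4\log k - 4}$.

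Apart from this rounding bookkeeping, which I expect to be the only (minor) obstacle, the corollary is immediate from Theorem~\ref{thm:upperbound}. The formalization in $\EA$ is inherited from that theorem, since $k^{4\log k}$ exists in $\EA$.
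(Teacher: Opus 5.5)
Your main step is exactly the paper's proof. The paper's proof consists solely of the instruction to apply Theorem~\ref{thm:upperbound} with $d := 2\log k$, and it says nothing about rounding; it relies on the blanket remark in the Preliminaries. Note that in the only application, Lemma~\ref{lem:forcing-int-2}, a homogeneous set of size $2^{\log\ell} > \ell/2 > \II$ would serve just as well.

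Your rounding concern is legitimate. Used as a black box, Theorem~\ref{thm:upperbound} needs instability parameter at least $k$ and domain at most $k^{4\log k}$. That forces $d \le 2\log k$, so it guarantees only $2^{\log k}$, which is less than $k$ unless $k$ is a power of $2$. Your two repairs fare differently.

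\textbf{The $d = 2\log k + 2$ repair.} This does not work as written. Applying Lemma~\ref{lem:grouping} as stated to blocks of size $k^4$ only produces the two levels $k^4 \to k^2 \to 1$. These are already counted in $d = 2\log k$, so there is no leftover factor to spend. Taking $d = 2\log k + 2$ would instead require a domain of size $k^{4\log k+4}$.

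\textbf{The smaller-$k'$ repair.} This one can be made rigorous, but the constraint you state is not the operative one. With $k' \le k$, the shrinking factor $k'^2$ is automatically below $k^2$; what you need is a lower bound. The fix is to decouple the instability bound, which stays $k$, from the base of the block sizes:
\begin{enumerate}[(i)]
\item Restrict $f$ to its first $k'^{2d}$ points and run the construction from the proof of Theorem~\ref{thm:upperbound} with blocks of size $k'^{2(d-|\sigma|)}$, where $k' = \lfloor k/2\rfloor$ and $d = 2\lceil \log_2 k\rceil$.
\item Then $k'^{2d} \le k^{4\log k}$, because $k < 2^{\log k + 1}$, and $2^{d/2} \ge k$.
\item In the proof of Lemma~\ref{lem:grouping}, the failure probability for a fixed $x$ becomes at most $k/(k'^2-5) \approx 4/k$, which is at most $1/2$.
\end{enumerate}
So the real requirement is that $k'^2$ stay well above $k$.
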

\begin{proof}
    Apply Theorem~\ref{thm:upperbound} with $d := 2\log k$.
\end{proof}

In the next section, we will study what the
upper bound expressed in Corollary \ref{cor:srttt-qpoly-ub}
means for the relationship between proof size in 
$\RCA^*_0 + \srttt$ and in $\RCA^*_0$.
Before turning to that topic, we point out 
what the lower bound of Corollary \ref{cor:srttt-qpoly-lb}
implies concerning the closure properties of $\I^0_1$.

\begin{theorem} \label{thm:srttt->i01-qpclosed} Over $\rcas$, the principle $\srttt$
implies quasipolynomial closure of $\I^0_1$.
That is, $\rcas + \srttt$ proves that $\I^0_1$ is closed under $\omega_1$.
\end{theorem}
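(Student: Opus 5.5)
We follow the model-theoretic setup of the proof of Theorem~\ref{thm:crttt->i01-expclosed}. Let $(L,\mathcal X)\vDash\rcas$ have $\I^0_1$ not closed under $\omega_1$. Then $\rca$ fails, so by \cite[Lemma 9]{ky:categorical} there is a proper exponentially closed $\Sigma^0_1$-cut $M$. The structure $(M,\cod(L/M))$ is a model of $\wkls$ with the same $\I^0_1$. By the analogue of \cite[Corollary 3.10]{fkk:weakcousins} for $\srttt$, it satisfies $\srttt$ iff $(L,\mathcal X)$ does; we expect to need to check that this transfer result covers $\srttt$ as well, and otherwise prove the needed direction directly. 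It therefore suffices to refute $\srttt$ in $(M,\cod(L/M))$.

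\textbf{Choosing the parameter and the colouring.} Pick $k\in\I^0_1$ such that $k^{(\log k)^{0.9}}>\I^0_1$. This is possible: if $\omega_1(x)\notin\I^0_1$ for some $x\in\I^0_1$, take $k$ a suitable power of $x$ (still in $\I^0_1$, which is closed under multiplication), so that $k^{(\log k)^{0.9}}\ge x^{\log x}$. We may also assume $k$ exceeds the standard constant of Corollary~\ref{cor:srttt-qpoly-lb}. Let $m=k^{(\log k)^{0.9}}$ and fix an $L$-finite $A$ of size $m$ with $\widehat A$ cofinal in $M$. Apply Corollary~\ref{cor:srttt-qpoly-lb} in $L$ and transport along the enumeration of $A$ to get $f\colon[A]^2\to2$ that is at most $k$-unstable and has no homogeneous set of size $k$.

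\textbf{Verifying the counterexample.} First, $\widehat f\in\cod(L/M)$ is stable on the unbounded set $\widehat A$: each $f(x,-)$ has at most $k-1$ change points, and all of them lie in $A$, so for $x\in M$ the last change point is some element of $A$. Above it the sequence is constant, in particular on all of $\widehat A$ beyond some $M$-bound; only the changes inside $M$ matter. The colouring is then extended to $\IN$, or we use the generalization of $\srttt$ to unbounded domains, which is provable in $\rcas$.

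Second, suppose $\widehat H$ is unbounded homogeneous for $\widehat f$ with $H\subseteq A$ $L$-finite. Its cardinality is a $\Sigma^0_1$-cut containing $\I^0_1\ni k$, so $\widehat H$ has at least $k$ elements. These form an $L$-finite homogeneous set of size $k$ for $f$, contradicting the choice of $f$.

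\textbf{Main obstacle.} The only real points are the correct choice of $k$ relative to $\I^0_1$ via closure under multiplication and the transfer between $L$ and $M$. A purely syntactic version inside $\rcas$, using an unbounded set of cardinality below $m$ in place of $A$, should work the same way. The combinatorial work is already contained in Corollary~\ref{cor:srttt-qpoly-lb}.
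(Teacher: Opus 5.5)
Your route is the paper's: pass to $(M,\cod(L/M))$ via the same transfer result from \cite{fkk:weakcousins}, place the colouring of Corollary~\ref{cor:srttt-qpoly-lb} on an $L$-finite $A$ with $\widehat A$ cofinal, and rule out homogeneous sets using $k\in\I^0_1$. However, two steps fail as written.

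\textbf{Choice of $k$.} Write $g(y)=y^{(\log y)^{0.9}}=2^{(\log y)^{1.9}}$. For $k=x^c$ you get $g(k)=2^{c^{1.9}(\log x)^{1.9}}$, and this is at least $x^{\log x}=2^{(\log x)^2}$ only if $c^{1.9}\ge(\log x)^{0.1}$. Since $x$ is nonstandard, $c$ must be nonstandard. Closure of $\I^0_1$ under multiplication only guarantees standard powers, so $x^c\in\I^0_1$ is not justified. The correct argument iterates $g$ instead: $g(g(y))=2^{(\log y)^{3.61}}\ge\omega_1(y)$ for large $y$. So if $\omega_1(x)\notin\I^0_1$, then either $g(x)\notin\I^0_1$ and $k=x$ works, or $g(x)\in\I^0_1$ and $k=g(x)$ works.

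\textbf{Stability.} The last change point of $f(x,-)$ may lie above $M$. In that case ``above it the sequence is constant'' says nothing about $\widehat A$. What you must show is that the change points lying in $M$ are bounded in $M$, and this is exactly where $k\in\I^0_1$ is needed on the stability side. The argument is the one you already use for homogeneous sets. The set $P_x$ of change points is $L$-finite with $|P_x|\le k-1$. If $\widehat{P_x}$ were cofinal in $M$, it would be an unbounded set in $\cod(L/M)$ whose cardinality cut contains $\I^0_1\ni k$, which is impossible since $|P_x|<k$.

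With these two repairs your proof coincides with the paper's.
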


\begin{proof}
The argument 
is similar to the one proving Theorem \ref{thm:crttt->i01-expclosed}.
Let $(L, \cal X)$ be a model of $\rcas$ with $\I^0_1(L, \cal X)$ not closed under $\omega_1$. We take a proper exponential $\Sigma^0_1$-cut $M$ in $(L, \cal X)$,
and we need to prove that $(M, \cod(L/M)) \not \vDash \srttt$. As in the proof of Theorem \ref{thm:crttt->i01-expclosed}, we write $\I^0_1$ for the common $\I^0_1$ of the two structures $(L, \cal X)$ and $(M, \cod(L/M))$, and we 
write $\widehat S$ for $S \cap M$ whenever $S \in \cal X$.

The lack of closure under $\omega_1$ implies that
there is some $k \in M$ such that $k \in \I^0_1 < k^{{(\log k)}^{0.9}} =: \ell$. We fix an $L$-finite set $A = \{a_0, \ldots, a_{\ell-1}\}$ with $\widehat A$ cofinal in $M$, and by Corollary \ref{cor:srttt-qpoly-lb} 
we take $f \colon [A]^2 \to 2$ that is at most $k$-unstable but has no homogeneous set of size $k$.

Then, since $k \in I^0_1$, the colouring $\widehat f: [\widehat A]^2 \to 2$ has to be stable: 
for every $a_i \in \widehat A$, the sequence $f(a_i,-)$
contains at most $k$ positions where it switches between 0 and 1, so the set of those positions cannot be cofinal in $M$. But similarly, given that every homogeneous set for $f$ has cardinality less than $k$, no homogeneous set for $\widehat f$ can be cofinal in $M$. Thus,  
$(M, \cod(L/M)) \not \vDash \srttt$.
\end{proof}

\begin{remark}
Using the upper bound of Corollary \ref{cor:srttt-qpoly-ub}
and a routine argument like the one in the proof of
\cite[Theorem 3.16]{fkk:weakcousins}, one can show that
$\srttt$ does not imply any superquasipolynomial closure
properties of $\I^0_1$, in the following sense.
If $g \colon \IN \to \IN$ is a computable function such that for every $\ell \in \omega$ there is $n \in \omega$ satisfying $\EA \vdash \forall x \! \ge \! n \, (g(x) \ge x^{(\log x)^\ell})$, then $\rcas + \srttt$ does not prove that $\I^0_1$ is closed under $g$.
This is also implicitly shown by the forcing construction
discussed in Section \ref{sec:forcing}.
\end{remark}

\begin{remark}
Theorem \ref{thm:srttt->i01-qpclosed} implies that $\rcas + \srttt$ is not $\Pi_4$-conservative over $\rcas$, by an argument analogous to that proving Corollary \ref{cor:crttt-not-conservative} but with $\exp$ replaced
by $\omega_1$. 

However, the lack of $\Pi_4$-conservativity of $\srttt$
over $\rcas$ was already implicitly proved in \cite{kky:infinity-weak}.
In Corollary 4.4 of that paper, it is shown that
$\rcas + \rttt$ proves the $\Pi_4$ sentence 
$\mathrm{C}\Sigma_2$ 
which does not follow from $\rcas$ alone. The only property
of $\rttt$ needed for that argument is that
$\rca + \rttt$ implies $\bd \Sigma_2$; but $\srttt$ has that property as well.
\end{remark}

\section{Polynomial simulation}\label{sec:forcing}

The aim of this section is to prove the following result.

\begin{theorem}\label{thm:non-speedup}
The theory $\wkls + \srttt$ is polynomially simulated
by $\rcas$ w.r.t.~$\forall\Pi^0_3$ sentences.
\end{theorem}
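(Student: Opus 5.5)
We will follow the forcing-interpretation method of Avigad, in the version worked out by Kowalik for $\cac$, and outsource the generic bookkeeping to that source. It suffices to give a polynomial-time translation of $\wkls+\srttt$-proofs of a $\forall\Pi^0_3$ sentence $\gamma$ into $\rcas$-proofs. We then work in $\rcas+\neg\gamma$ and derive a contradiction by constructing, via an interpretation whose size is linear or polynomial in the number of axioms used, a structure satisfying $\wkls+\srttt+\neg\gamma$.

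Model-theoretically, let $(L,\mathcal X)\vDash\rcas+\neg\gamma$. If $\ind\Sigma^0_1$ holds, we use the known polynomial simulation of $\wkl+\srttt$ over $\rca$ for $\forall\Pi^0_3$ (in fact $\Pi^1_1$ over $\ind\Sigma_1$ suffices via the usual conservation). This is only needed uniformly, and the uniform reduction to $\I^0_1$ below covers both cases. Otherwise, we pass, as in the proofs of Theorems~\ref{thm:crttt->i01-expclosed} and~\ref{thm:srttt->i01-qpclosed}, to a cut. We take a $\Sigma^0_1$-definable cut $I$ lying inside $\I^0_1$ but closed under all the finitely many quasipolynomial functions we need, with $\neg\gamma$ witnessed below $I$. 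The key closure used is $k\in I\Rightarrow k^{4\log k}\in\I^0_1$. Closing only under a fixed finite iteration of $\omega_1$ (or multiplication) costs nothing in $\rcas$, since $\I^0_1$ is closed under multiplication and we may take cuts like $\{x : x^{\log^{\ell} x}\in J\}$ for a $\Sigma^0_1$ cut $J$; this shortening is definable by a formula of bounded size.

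The forcing notion itself will be Mathias-style conditions $(F, X)$ for building a homogeneous set for a given stable $f$. The key point is that in the relevant model the ``infinity'' of the generic set is measured against a cut $J$. Given a stable instance $f$ restricted to an $L$-finite set $A$ of cardinality in the cut, any $a\in A$ below the relevant bound has $f(a,-)$ switching colours fewer than $k$ times, for some $k$ above $J$ but with $f$ at most $k$-unstable on the finite piece considered. Corollary~\ref{cor:srttt-qpoly-ub} then supplies a homogeneous subset of size $k$ from any set of size $k^{4\log k}$. This is the finitary content that replaces the $\Delta^0_2$ construction of a homogeneous set and lets the generic be a genuine unbounded set in the cut-model.

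Iterating over all instances requires the cut to shrink by $\omega_1$ at each of boundedly many stages, which is why we need only a standard number of iterations, polynomial in the proof size. We also add $\wkl$ via the $\cod(M/I)$ construction, which needs exponential closure only of $I$ relative to $M$. The final step, checking that each axiom of $\srttt$ used in the proof is forced with $\rcas$-proofs of polynomial size, follows Kowalik's template, and $\forall\Pi^0_3$ sentences are absolute enough between $L$ and the cut-structure that $\neg\gamma$ transfers. The main obstacle I expect is making the density argument work. Extending a condition requires finding inside a given reservoir $X$ a large finite set on which $f$ is at most $k$-unstable with $k$ small relative to the size. Stability gives eventual constancy but no uniform bound on the number of switches. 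I would first try to handle this by thinning $X$ (restricting to elements beyond all switch points of the finitely many relevant rows), which is $\Delta^0_2$ and hence must be approximated by a counting argument inside the cut. I would borrow from the proof of Lemma~\ref{lem:grouping} the idea of choosing random intervals where rows are constant.
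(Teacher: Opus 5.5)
Your plan has a genuine gap, at the point where the paper's argument does its real work.

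\textbf{The forcing notion cannot force the axiom.} Polynomial simulation comes from a forcing interpretation in which the \emph{single} axiom $\srttt$ is shown to be forced by a fixed, constant-size proof. That axiom quantifies over every colouring of the generic structure, including colourings named only after earlier stages. A Mathias-style forcing $(F,X)$ that solves one stable instance, iterated over ``boundedly many'' instances, does not achieve this. The number of applications of $\srttt$ in the input proof is not bounded by a standard number, so the iteration would depend on the proof, and the cut would shrink by $\omega_1$ at every stage. A model-theoretic construction of this kind would give at best conservativity, not a polytime translation.

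The paper avoids iteration entirely. Its interpretation $\tau_2$ inside $\EA+\SC_{\omega_1}$ forces a \emph{generic cut}. Conditions are finite sets $p$ with $|p|>\II$ and $2^x<y$ for $x<y$ in $p$, ordered by inclusion, and second-order names are ground-model finite sets. Every colouring is therefore a finite object of the ground model, and a single density argument handles all instances at once. $\wkls$ comes from exponential closure of the generic cut.

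\textbf{The switch-count obstacle is not resolved.} The obstacle you identify, that stability gives no uniform bound on the number of colour switches, is the crux. Thinning a reservoir by a $\Delta^0_2$ process or sampling random intervals does not resolve it. In the generic-cut setting it is handled by overspill, in the Claim in the proof of Lemma \ref{lem:forcing-int-2}. If $p\Vdash\text{`}f\text{ stable'}$, then $f\restdto{[p]^2}$ is at most $k$-unstable for some $k\in\II$. Otherwise some row $f(x_0,-)$ switches $\ell>\II$ times on $p$, and $x_0$ together with the switch points is a condition $q\condle p$ forcing that $f$ is not stable. One then shrinks $p$ to size $\ell^{4\log\ell}$ with $\ell>\II$. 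Corollary \ref{cor:srttt-qpoly-ub} then gives a homogeneous set of size $\ell$, which is itself a condition and forces its own unboundedness.

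\textbf{The cut you need is not supplied.} The argument above requires $\II$ to be a proper cut closed under $\omega_1$ such that for every $x$ some $v>\II$ has $2_v(x)$ defined. Your cut construction does not provide this. Every $\Sigma^0_1$-definable cut contains $\I^0_1$, so no such cut lies strictly inside $\I^0_1$. Shortenings like $\{x : x^{\log^\ell x}\in J\}$ are also not closed under $\omega_1$. The paper instead uses the Solovay-shortened subcut $\varphi(x) := \forall y\,(y\in\I^0_1\to 2^{\log x\cdot\log y}\in\I^0_1)$. It also needs a case split on $\Sigma^0_1$-$\LPC$, with the restricted-ultrapower forcing $\tau_{1,\omega_1}$ in the $\LPC$ case to secure the $2_v(x)$ requirement.

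\textbf{A smaller error.} Your parenthetical about $\Pi^1_1$-conservativity is false: $\rca+\srttt$ proves $\bd\Sigma_2$. The reduction to $\neg\ind\Sigma^0_1$ instead uses the known $\forall\Pi^0_3$ polynomial simulation of $\wkl+\rttt$ by $\rca$.
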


The proof of Theorem \ref{thm:non-speedup} -- or, at least,
the proof we present here -- 
relies on Corollary \ref{cor:srttt-qpoly-ub}
and the technique of forcing interpretations, introduced originally (\emph{avant la lettre}) in \cite{avigad:formalizing-forcing}. 

The high-level structure of our argument is modelled closely on the proof of an analogous polynomial simulation result
for $\cac$ given by Kowalik in \cite{kowalik:cac}. 
However, there are two significant modifications, due to the fact that
the first-order part of $\srttt$ is more complex than that
of $\cac$ and to the quasipolynomial rather than polynomial nature of our bounds in Section \ref{sec:bounds}. 
Since the routine details of forcing interpretations are often highly tedious to describe even when the idea is quite clear, we present the proof in the form of a somewhat lengthy sketch focusing on the novel aspects of our argument.
More precisely, we first review the concept of forcing interpretations, and then we give a rather careful summary of the polynomial simulation proof, providing full details only in those places where our reasoning diverges from 
the argument of \cite{kowalik:cac}. 

\paragraph{Forcing interpretations.} In general, the idea of a forcing interpretation of a theory $T$ in a theory $S$ is that one defines in $S$ a notion of `$\psi$ is forced' for $\mathcal{L}_T$-formulas $\psi$, in such a way that $S$ can prove that each axiom of $T$ is forced. Then a $T$-proof of a sentence $\theta$ can be transformed 
into an $S$-proof of `$\theta$ is forced'. 
If, in addition, $\theta$ is actually in $\mathcal{L}_S \cap \mathcal{L}_T$ and we can construct an $S$-proof
of (`$\theta$ is forced' $\rightarrow \theta$),
then we obtain an $S$-proof of $\theta$ by modus ponens. Assuming that proofs verifying various properties of the forcing interpretation can be constructed in polynomial time, the transformation of the $T$-proof of $\theta$ into the $S$-proof is polytime as well.

In the form proposed in \cite[Section 1]{kwy:ramsey-proof-size} and then used in \cite{kowalik:cac}, to provide a forcing interpretation $\tau$ of $T$ in $S$ one has to define in $S$ the following: a set $\Cond_\tau$ of forcing conditions, a relation $\condle_\tau$ on the conditions, a set $\Name_\tau$ of names (intended to name elements of the generic $\mathcal{L}_T$-structure being described by the forcing), 
and the forcing relations 
$p \Vdash_\tau v \defd$ and $p \Vdash_\tau \alpha(\overline v)$, where $p$ is a condition, $v,\overline v$ are names, and $\alpha$ is an \emph{atomic} $\mathcal{L}_T$-formula (the intuitive meaning of $p \Vdash_\tau v \defd$ is that $v$ will in fact name an element of the generic structure as long as $p$ is in the generic filter). These definitions have to satisfy a number of well-behavedness requirements 
provably in $S$: 
roughly, $\condle_\tau$ should be a preorder, 
the forcing relation(s) should be monotone w.r.t.~$\trianglelefteqslant_\tau$, the equality axioms should be forced, and $p$ should force a statement (be it $v \defd$ or $\alpha(\overline v)$) if it is densely forced below $p$.
(For a precise formulation of the requirements, see
\cite[Definition 3.2]{kowalik:cac}; or \cite[Definition 1.8]{kwy:ramsey-proof-size}, where, however, the treatment of atomic formulas has to be more careful due to greater intended generality).

The definition of $p \Vdash_\tau \theta$ is then extended to non-atomic formulas $\theta$ using standard inductive conditions:
\begin{enumerate}[(i)]
    \item $p \Vdash_\tau \neg \theta$ iff $q \not \Vdash_\tau \theta$ for all $q \condle_\tau p$;
    \item $p \Vdash_\tau  (\theta \to \eta)$ iff for every $q \condle_\tau p$ such that $q \Vdash_\tau \theta$ there is $r \condle_\tau q$ such that $r \Vdash_\tau \eta$;
    \item $p \Vdash_\tau \forall x\, \theta(x)$ iff for every name $w$ and every $q \condle_\tau p$ such that $q \Vdash_\tau  w \defd$, there is $r \condle_\tau q$ such that $r \Vdash_\tau  \theta(w)$.
\end{enumerate}
(For simplicity, we suppress names corresponding to potential free variables in $\theta, \eta$ other than $x$ in (iii)). 
The final requirement that has to be satisfied 
for $\tau$ to be a forcing interpretation of $T$
is that $S$ proves $\Vdash_\tau \psi$ for each axiom $\psi$ of $T$.
Here $\Vdash_\tau \ldots$ stands for $\forall p \! \in \! \Cond_\tau \, (p \Vdash_\tau \ldots)$.

\begin{remark}
As discussed in the Preliminaries,
our official language does not contain connectives other than $\neg, \to, \forall$, but typical inductive conditions also apply to forcing formulas involving other connectives if they
are defined from $\neg, \to, \forall$ in the usual way. In particular, $p$ forces an existential statement iff densely below $p$ a specific instance is forced.    
\end{remark}

If the existence of $\tau$ is to imply that
$T$ is polynomially simulated by $S$ w.r.t.~a class of sentences $\Gamma$, two additional requirements have to be met. Firstly, $\tau$~has to be a \emph{polynomial forcing interpretation of $T$}, which in the simple case of finitely axiomatized $T$ means that the $S$-proofs of some well-behavedness properties of $\Vdash_\tau$ on atoms
(for example, that the equality axioms are forced) can be constructed in polytime. Secondly, $\tau$ has to be \emph{polynomially $\Gamma$-reflecting}: there has to be a polytime procedure which, given a sentence $\gamma \in \Gamma$, outputs an $S$-proof of $(\Vdash_\tau \gamma) \rightarrow \gamma$.
(The fact that the existence of a 
{polynomially $\Gamma$-reflecting} forcing interpretation
of $T$ in $S$ implies polynomial simulation of $T$ by $S$ w.r.t.~$\Gamma$ is stated as \cite[Theorem 3.8]{kowalik:cac} 
or \cite[Theorem 1.18]{kwy:ramsey-proof-size}, though the idea goes back to \cite[Section 10]{avigad:formalizing-forcing}.)

\paragraph{Structure of the polynomial simulation argument.} 
We now discuss the structure of the proof of Theorem \ref{thm:non-speedup}.
The result of \cite{kwy:ramsey-proof-size} that $\wkl + \rttt$ is polynomially simulated by $\rca$ w.r.t.~$\forall \Pi^0_3$ sentences means that for any consequence $\xi$ of $\rttt$, polynomial simulation of $\wkls + \xi$ by $\rcas$ 
follows from polynomial simulation of $\wkls + \xi$ 
by $\rcas + \neg \ind \Sigma^0_1$ 
(all still at the $\forall \Pi^0_3$ level).
To achieve this for $\xi$ equal to $\srttt$, 
we introduce an intermediate auxiliary theory
${\EA} + {\SC_{\omega_1}}$, where $\SC_{\omega_1}$ is the following axiom in the language $\mathcal{L}_\PA \cup \{\II\}$ for a unary predicate $\II$:
\begin{quote}
$\II$ is a proper cut closed under $\omega_1$ \\ 
such that for any $x$ there is some $v>\II$ for which $2_v(x)$ exists.
\end{quote}
This is a stronger version of the axiom $\SC$ considered in 
\cite{kowalik:cac}, which required only closure of $\II$ 
under multiplication rather than under $\omega_1$. The reason why this strengthening turns out to be relevant is that the bound from Corollary \ref{cor:srttt-qpoly-ub} is quasipolynomial, in contrast to the polynomial bound on Ramsey numbers for $\cac$ that follows from Dilworth's Theorem.

We need polynomial forcing interpretations of ${\EA} + {\SC_{\omega_1}}$ 
in $\rcas + \neg \ind \Sigma^0_1$ and of $\wkls + \srttt$ in $\EA + {\SC_{\omega_1}}$. We begin with the former.

\paragraph{Interpreting the auxiliary theory.}
In order to (forcing-)interpret $\EA + {\SC}$ (without the additional requirement that $\II$ be closed under $\omega_1$)
in $\rcas + \neg \ind \Sigma^0_1$, we follow \cite{kowalik:cac} by making a case distinction. 
Let $\Sigma^0_1$-$\LPC$ be the statement ``there exists a least proper $\Sigma_1^0$-definable cut''.
If $\neg \Sigma^0_1$-$\LPC$ holds, then there is 
a straightforward (non-forcing) interpretation in which all the arithmetical operations are interpreted identically and $\II$ is interpreted as $\I^0_1$. On the other hand,
under $\Sigma^0_1$-$\LPC$ we can use a forcing interpretation $\tau_1$ that describes a \emph{restricted ultrapower} of the first-order universe (modulo a generic ultrafilter)
with $\II$ interpreted as the cut generated by $\I^0_1$ of the ground model. Conditions are unbounded subsets of $\IN$ (potential elements of the ultrafilter),
the order $\condle_{\tau_1}$ is inclusion, names are total 
functions from $\IN$ to $\IN$. 
The details of the forcing
relation are exactly as in \cite{kowalik:cac}
and will not be referred to below. 
Thus, we do not present them here except for noting that $p \Vdash_{\tau_1} t(\overline v) \in \II$ if there is $k \in \I^0_1$ such that $t(\overline v(x)) \le k$ holds for all but finitely many $x \in p$ (intuitively, the element of the ultrapower given by $t(\overline v)$, where $\overline v$ is a tuple of functions, is bounded by an element of $\I^0_1$, identified as usual with a constant function).

Making the additional step from $\SC$ to ${\SC_{\omega_1}}$ is actually quite easy, thanks to Solovay's technique of shortening cuts. 
There is a definable subcut of $\I^0_1$ that is provably closed under $\omega_1$, namely the cut defined by \[\varphi(x) := \forall y\, (y \in \I^0_1 \to 2^{\log x \cdot \log y} \in \I^0_1).\] 
Both in the $\neg\Sigma^0_1$-$\LPC$ and in
the $\Sigma^0_1$-$\LPC$ case, substituting $\varphi$ for
the definition of $\I^0_1$ throughout gives rise to a polynomial forcing interpretation of ${\EA} + {\SC_{\omega_1}}$ (which does not actually involve forcing
in the $\neg\Sigma^0_1$-$\LPC$ case).
In the $\Sigma^0_1$-$\LPC$ case, as a gesture to  subscript enthusiasts, we will refer to the resulting forcing interpretation
as $\tau_{1, \omega_1}$.

Summarizing this part of the argument, we get: 

\begin{lemma}[cf.~\cite{kowalik:cac}, Lemma 4.4]\label{lem:forcing-int-1}
The forcing interpretation 
$\tau_{1, \omega_1}$ 
is a polynomial forcing interpretation of ${\EA} + {\SC_{\omega_1}}$ in $\rcas + \neg \ind \Sigma^0_1 + \Sigma^0_1\text{-}\LPC$.
There is also a (non-forcing) interpretation of ${\EA} + {\SC_{\omega_1}}$ in $\rcas + \neg \ind \Sigma^0_1 + \neg \Sigma^0_1\text{-}\LPC$ that is the identity on $\mathcal{L}_\PA$.
\end{lemma}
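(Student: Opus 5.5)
The plan is to take Kowalik's Lemma 4.4 as a black box for the plain $\SC$ version and to check that swapping in the shortened cut $\varphi$ for $\I^0_1$ keeps everything intact. Kowalik's lemma gives exactly the two statements of Lemma \ref{lem:forcing-int-1}, but with $\II$ interpreted via $\I^0_1$ and only closure under multiplication. So the two things to verify are these. First, the cut $J$ defined by $\varphi$ is provably (in $\rcas$, with polynomial-size proofs) a cut contained in $\I^0_1$ and closed under $\omega_1$. Second, every property of $\I^0_1$ used in Kowalik's verification that the $\SC$ axiom is forced still holds for $J$.

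For the first point I would argue in $\rcas$ as follows. Since $\I^0_1$ is closed under multiplication, $\log$ of an element of $\I^0_1$ behaves additively: $2^{\log x\cdot\log y}\approx x^{\log y}$. Downward closure of $J$ is immediate from monotonicity. Closure under successor needs $2^{\log(x+1)\log y}\le y^{\log x+1}$-type estimates, so it reduces to closure of $\I^0_1$ under multiplication once $x$ is standard. For general $x\in J$ one shows $J$ is closed under squaring, or at least under $\omega_1$. If $x\in J$ and $y\in\I^0_1$, then $\log\omega_1(x)=(\log x)^2$ and we need $2^{(\log x)^2\log y}\in\I^0_1$. Write this as $2^{\log x\cdot \log y'}$ with $y'=2^{\log x\log y}\in\I^0_1$, using $x\in J$. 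Then apply $x\in J$ again to $y'$. Thus $J\subseteq\I^0_1$ is closed under $\omega_1$, and taking $y=2$ also gives $J\subseteq \I^0_1$. These are fixed finite arguments, so their formal proofs have constant size, and instances are obtained by substitution in polynomial time.

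For the second point I split into the two cases. In the $\neg\Sigma^0_1$-$\LPC$ case, $\II:=J$. Then $J$ is proper because $J\subseteq\I^0_1$ and $\I^0_1$ is proper under $\neg\ind\Sigma^0_1$. Moreover, for any $x$ there is some $v>\II$ with $2_v(x)$ existing. This holds because $\I^0_1$ has no least element above it, so some proper $\Sigma^0_1$-cut lies strictly above $J$; I expect this to be the subtle point, and I would rely on the $\neg$LPC assumption to pick $v$ in $\I^0_1\setminus J$, or else $v$ slightly above $J$. In the $\Sigma^0_1$-$\LPC$ case, the forcing $\tau_{1,\omega_1}$ interprets $\II$ as the cut generated by $J$ of the ground model in the restricted ultrapower. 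The forced closure under $\omega_1$ follows from pointwise closure: if $t(\overline v(x))\le k$ with $k\in J$ on almost all of $p$, then $\omega_1(t(\overline v(x)))\le\omega_1(k)\in J$. The remaining parts of the $\SC$ axiom, namely properness and the existence of $v>\II$ with $2_v(x)$ defined, are handled as in Kowalik. There the element $v$ is named by a function, and the fact that it exceeds $\II$ in the ultrapower uses only that every element of the ground cut is dominated, together with Kowalik's bound on iterated exponentials; these transfer since $J\subseteq\I^0_1$.

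The main obstacle I expect is the "some $v>\II$ with $2_v(x)$" clause after shrinking the cut. I would try to show that Kowalik's witness already exceeds $\I^0_1$ and hence exceeds $J$, so that shrinking only helps. Polynomiality then follows because all modifications consist of substituting the fixed formula $\varphi$ into Kowalik's polytime-constructed proofs.
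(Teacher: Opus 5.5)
Your proposal follows essentially the paper's route: Solovay-style shortening of $\I^0_1$ to the cut defined by $\varphi$, substituted for $\I^0_1$ in Kowalik's two interpretations. Closure under $\omega_1$ comes from applying the defining property of $\varphi$ twice, exactly as you do, and the remaining clauses of $\SC_{\omega_1}$ are inherited from the inclusion of the new cut in $\I^0_1$.

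The clause you flag as the main obstacle is in fact immediate: any witness $v > \I^0_1$ from Kowalik's argument is automatically above the shortened cut. Your intermediate idea of picking $v$ in $\I^0_1$ minus the shortened cut would not work in general, because the two cuts coincide whenever $\I^0_1$ is already closed under $\omega_1$.
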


\begin{remark}
Although the step from $\SC$ to $\SC_{\omega_1}$
described above is very simple from a technical point of view,
the possibility of shortening cuts is the main reason why a \emph{quasipolynomial} (or even quasiquasipolynomial, etc.) bound on the finite version of a Ramsey-theoretic 
principle makes it possible to prove a \emph{polynomial} simulation over $\rcas$.
\end{remark}

\paragraph{Interpreting $\srttt$.}
The formulas defining our forcing interpretation $\tau_2$ of $\wkl + \srttt$ in $\EA + {\SC_{\omega_1}}$ are exactly
the same as those used in \cite{kowalik:cac} to 
forcing-interpret $\wkl + \cac$ in ${\EA} + {\SC}$.
The difference is that we now have a stronger
theory of the ground model,
guaranteeing better closure properties of $\II$, which will
allow us to interpret the additional principle $\srttt$.

The forcing interpretation $\tau_2$
describes a \emph{generic cut}, 
with the coded sets as the second-order structure, 
satisfying $\wkls$. The forcing conditions of $\tau_2$
are finite sets $p$ such that 
\begin{enumerate}[(i)]
    \item $|p|>\II$.
    \item $\fain {x,y} p (x<y\rightarrow 2^x<y)$.
\end{enumerate}
A condition $q$ extends $p$ in the sense of $\condle_{\tau_2}$ if $q\subseteq p$. Intuitively,
a condition $p$ approximates a cut between $\min p$ and $\max p$ such that elements of $p$ appear arbitrarily high in the cut.
Requirement (i) on $p$ guarantees that $\I_1^0$ of the cut contains $\II$ (in fact, it will be exactly $\II$ by genericity); requirement (ii) guarantees that $\exp$ holds in the cut.

Since $\wkls$ is a two-sorted theory, we need two kinds of names: first-order names $v$ are natural numbers, second-order names $V$ are also natural numbers but viewed as the finite sets they code. It is always the case that $p \Vdash_{\tau_2} V\defd$, while $p \Vdash_{\tau_2} v\defd$ if $p\cap[0,v]$ is not a condition: intuitively, $p\cap[0,v]$ is then small, so the cut approximated by $p$ will be above $v$. Finally, for an atom $\alpha$ we have $p \Vdash_{\tau_2} \alpha(\overline v,\overline V)$ 
if $p \Vdash_{\tau_2} \overline v\defd$ 
and $\alpha(\overline v,\overline V)$ holds (where an atom of the form $t \in V$ is understood according to the usual Ackermann interpretation of set theory in arithmetic).

\begin{lemma}[cf.~{\cite[Lemma 4.9]{kowalik:cac}}]\label{lem:forcing-int-2}
The forcing interpretation $\tau_2$ is a polynomial
forcing interpretation of $\wkls + \srttt$ in ${\EA} + {\SC_{\omega_1}}$. 
\end{lemma}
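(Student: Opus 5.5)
Since $\tau_2$ is defined by exactly the same formulas as the interpretation of $\wkls + \cac$ in $\EA + \SC$ from \cite{kowalik:cac}, and $\SC_{\omega_1}$ strengthens $\SC$, I will take over from \cite[Lemma 4.9]{kowalik:cac} everything except $\srttt$. This covers the well-behavedness requirements, their polytime $S$-proofs, and the forcing of the axioms of $\wkls$. So the only new task is to give a fixed (hence trivially polytime) $\EA + \SC_{\omega_1}$-proof that $\Vdash_{\tau_2} \srttt$. Unwinding the inductive clauses and density, it suffices to argue as follows. Let $p$ be a condition and $F$ a second-order name (a finite set coding $f\colon [\IN]^2 \to 2$ on some domain). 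Let $q \condle p$ be such that $q$ forces ``$F$ is a colouring of pairs and $F$ is stable''. We must find $r \condle q$ and a name $H$ with $r \Vdash$ ``$H$ is unbounded and homogeneous for $F$''.

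First I will translate ``$q \Vdash F$ is stable'' into a finite-combinatorial statement in the ground model. Forcing of unboundedness of a coded set $X$ below $r$ should amount, as in \cite{kowalik:cac}, to: densely below $r$, $X$ meets $[r_i, r_{i+1})$ for many consecutive pairs of elements of the condition. Stability of $F$ at $x$ says that the switching points of $f(x,-)$ are bounded in the generic cut. I will use density to shrink $q$ to $r_0 \condle q$ with a clean form: writing $r_0 = \{a_0 < a_1 < \dots\}$ with $|r_0| > \II$, I want, for each $i$ (restricting to the relevant elements), the sequence $f(a_i, -)$ restricted to $r_0$ to switch at most $k$ times for some $k \in \II$. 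To get this I will first pass to a suitable subset where, for each point, the number of switches is bounded by an element of $\II$. If this number were not in $\II$, then by thinning $r$ so that switches occur between consecutive elements, the switching set would be forced unbounded, contradicting stability. A uniform $k$ is obtained by a pigeonhole/overspill argument over the $|r_0|$ many points, using that $\II$ is a proper cut. This step, converting a $\Pi$-type forcing statement into a uniform bound $k \in \II$ on instability of a large subcondition, I expect to be the main obstacle. It is exactly where the higher quantifier complexity of $\srttt$ compared to $\cac$ enters. I would handle it by choosing, for each element, the least index beyond which no switch occurs, and then applying the forcing-negation clause to a carefully chosen coded set of switch points.

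Once $r_0$ has at least $N$ elements and $f\restdto{[r_0]^2}$ is at most $k$-unstable with $k \in \II$, I apply Corollary~\ref{cor:srttt-qpoly-ub}. Here $N \ge \omega_1^{(4)}(k') > \II$ can be arranged since $|r_0| > \II$ and we may pick $k'>\II$ with $\omega_1$-iterates still below $|r_0|$. Iterating it (or applying it with a larger target size $k'$ such that $k'^{4\log k'} \le |r_0|$, with $k' > \II$ chosen by closure of $\II$ under $\omega_1$ and properness), I obtain a homogeneous $H \subseteq r_0$ with $|H| > \II$. Note that being at most $k$-unstable is preserved for $k' \ge k$. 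Since $H \subseteq r_0$, the sparseness condition (ii) is inherited, so $r := H$ is a condition with $r \condle q$. Then $r$ forces that the name $H$ (the finite set $r$ itself) is homogeneous, which is an atomic/$\Delta_0$ fact holding outright. It also forces that $H$ is unbounded, because every element of $r$ lies in $H$ and unboundedness of the generic cut is witnessed by elements of the condition. This is the standard argument used for the $\cac$ solution in \cite{kowalik:cac}, and I would reuse it verbatim.
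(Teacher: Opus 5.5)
Your proposal is correct and follows essentially the same route as the paper. The paper's Claim shows that any condition forcing stability of $f$ is at most $k$-unstable for some $k\in\II$: otherwise overspill yields a point $x_0$ at which $f(x_0,-)$ switches $\ell>\II$ times, and thinning to $x_0$ plus its switch points gives a condition forcing instability. It then shrinks to size $\ell^{4\log\ell}$ with $\ell>\II$ using $\omega_1$-closure of $\II$, applies Corollary~\ref{cor:srttt-qpoly-ub}, and takes the homogeneous set itself as the new condition, exactly as you do. The only difference is cosmetic: no preliminary shrinking is needed in your first step, since your contradiction argument applies to the given condition directly and the uniform bound is simply the maximal switch count, attained at a single point.
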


\begin{proof}
The verification that $\tau_2$ is a forcing interpretation of $\wkls$ is just like in \cite[Lemma 4.9]{kowalik:cac}. (Intuitively, requirement (ii) from the definition
of $\Cond_{\tau_2}$ ensures that the generic cut is exponentially closed, and every exponentially closed proper cut in a model of $\ind \Delta_0$ is a model of $\wkls$.)

Also the fact that $\tau_2$ is a polynomial forcing interpretation is proved like in \cite{kowalik:cac}, 
because the only new thing we have to do is to provide 
a (necessarily constant-sized) proof in $\EA + \SC_{\omega_1}$ that the single statement $\srttt$ is forced.

We reason inside $\EA+{\SC_{\omega_1}}$, and we drop the subscript ${}_{\tau_2}$, writing $\Vdash$, $\condle$ etc.
Our arguments below will make use of the following intuitively obvious fact (see \cite{kwy:ramsey-proof-size} or \cite{kowalik:cac} for a proof): if $p \in \Cond$, then
$p \Vdash \text{`}p \text{ is unbounded'}$, where $\text{`}p \text{ is unbounded'}$ stands for $\forall x\, \exists y\, (y \ge x \land y \in p)$.

Since all logical equivalences are forced, we can 
assume that $\srttt$ states:
\begin{equation}\label{eqn:srt}
\forall f \,(\text{`}f \text{ stable'} \to 
\exists H \, (\text{`}H \text{ unbounded'}
 \land \text{`}H \text{ homogeneous for } f\text{'})).
\end{equation}

Assume that $p$ is a condition, $f$ is a second-order name 
and $p$ forces that $f$ is a function from $[\IN]^2$ to $2$;
this lets us also assume w.l.o.g.~(by possibly shrinking $p$, thus passing to a stronger condition) that $f\restdto{[p]^2}$
is in fact a 0-1 valued function. We want to show that
$p$ forces the statement within parentheses in (\ref{eqn:srt}).

We make the following claim.

\begin{claim}\label{claim:stable_forcing}
    If $p \Vdash \text{`}f \text{ stable'}$, then 
    there exists some $k\in\II$ such that $f\restdto{[p]^2}$ is at most $k$-unstable.
\end{claim}

\begin{proofofclaim}
    Suppose otherwise.  Then by $\Delta_0$ overspill, there
    is some $x_0 \in p$ and $\ell > \II$ such that 
    $f\restdto{[p]^2}(x_0,-)$ changes value $\ell$ times.
    Let $q$ be the subset of $p$ consisting of $x_0$
    and all the places where $f\restdto{[p]^2}(x_0,-)$
    changes value. Then $|q| = \ell +1$, so $q$ is 
    a condition and $q \condle p$.
    We will show that $q \Vdash \text{`}f \text{ not stable'}$, which will contradict our assumption that $p\Vdash \text{`}f \text{ stable'}$.

    Note that $x_0 = \min q$, so $q \Vdash x_0 \defd$. 
    Since $q \Vdash \text{`}q \text{ unbounded'}$,
    to prove that $q$ forces that $f$ is not stable it suffices to show:
    \begin{equation}\label{eqn:unstable}
    q \Vdash \forall x \, (x \in q \land x > x_0 \to 
    \exists y \, (y > x \land f(x_0,x) \neq f(x_0,y)).    
    \end{equation}
    Let $q = \{x_0 < x_1 < \ldots < x_\ell\}$. Assume that for some $r \condle q$ and some name $w$, we have 
    $r \Vdash (w \in q \land w > x_0)$. Then by the definition of forcing for atoms, we must have $w = x_i$ for some $0 < i < \ell$, and $r \Vdash x_i \defd$.
    But this means that $r \Vdash x_{i+1} \defd$ as well,
    and then $r \Vdash (x_{i+1} > x_i \land f(x_0,x_i) \neq f(x_0,x_{i+1}))$.
    This completes the proof of (\ref{eqn:unstable})
    and hence of the Claim.
\end{proofofclaim}

To show that our condition $p$ forces (\ref{eqn:srt}), 
assume that $p \Vdash \text{`}f \text{ stable'}$.
By the axiom $\SC_{\omega_1}$, 
the cut $\II$ is closed under $\omega_1$, 
so by possibly removing some elements from $p$, 
we may also assume that $|p|=\ell^{4\log \ell}$ for some $\ell>\II$.
By Claim~\ref{claim:stable_forcing}, the colouring 
$f\restdto{[p]^2}$ is at most $\ell$-unstable.
Thus, we can apply Corollary \ref{cor:srttt-qpoly-ub}
to conclude that there is some $q \subseteq p$ of 
cardinality $\ell$ homogeneous for $f$.

Since $\ell > \II$, we know that $q \in \Cond$, which means that $q \condle p$. We have 
$q \Vdash \text{`}q \text{ unbounded'}$,
and it is easy to verify that 
$q \Vdash \text{`}q \text{ homogeneous for } f\text{'}$,
given that $q$ is, in fact, homogeneous for $f$.
This suffices to prove that $p$ forces (\ref{eqn:srt}).
\end{proof}

\paragraph{Reflection.}
This part of the argument is just like in \cite{kowalik:cac}
except for the additional appearance of $\omega_1$. 
We do not prove a polynomial reflection property for each forcing interpretation separately (note that this would involve changing the class of sentences involved, 
as $\rcas$ and $\wkls + \srttt$ are two-sorted theories, 
while the intermediate theory ${\EA} + {\SC_{\omega_1}}$ 
is one-sorted). 
Instead, reflection is proved directly for the composition:

\begin{lemma}[cf.~\cite{kowalik:cac}, Lemma 4.11]\label{lem:reflection}
There exists a polytime procedure which, given an
$\exists \Sigma^0_3$ sentence $\xi$, outputs
a proof in 
${\rcas} + {\neg \ind \Sigma^0_1} + {\Sigma^0_1\text{-}\LPC}$ of
\[\xi \to (\Vdash_{\tau_{1,\omega_1}}(\not \Vdash_{\tau_2} \neg \xi)).\]
\end{lemma}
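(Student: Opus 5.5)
We will follow the proof of \cite[Lemma 4.11]{kowalik:cac}, tracking the one place where closure of $\II$ under $\omega_1$ (rather than multiplication) enters. Write $\xi$ as $\exists \overline X\, \exists \overline x\, \forall \overline y\, \exists \overline z\, \forall \overline u\, \delta(\overline X,\overline x,\overline y,\overline z,\overline u)$ with $\delta \in \Delta^0_0$. We argue in $\rcas + \neg\ind\Sigma^0_1 + \Sigma^0_1\text{-}\LPC$ and assume $\xi$, with witnesses $\overline X, \overline x$.

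The first task is to show that, densely in $\tau_{1,\omega_1}$, the ground model's truth of the $\Pi^0_2$ matrix is reflected into the generic cut of $\tau_2$. Our plan is to let $M_0$ denote the least proper $\Sigma^0_1$-cut. Since every proper $\Sigma^0_1$-cut $J$ contains $M_0$ and $\I^0_1 \subseteq M_0$, the ground model's $\Sigma^0_1$-cut on which $\forall \overline y \in J\ \exists \overline z \in J \ldots$ holds can be built uniformly. Concretely, using $\neg \ind\Sigma^0_1$, we will find a $\Sigma^0_1(\overline X)$-definable unbounded set enumerating an increasing sequence $a_0 < a_1 < \cdots$ indexed by a proper $\Sigma^0_1$-cut. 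Each $a_{i+1}$ exceeds $2^{a_i}$ and bounds witnesses $\overline z$ for all $\overline y \le a_i$ (this is a $\Sigma^0_1$ search relative to the $\Pi^0_1$ part, handled as in Kowalik by absorbing $\forall \overline u$ into the definition of the set). This gives names in the restricted ultrapower: the function $x \mapsto$ the finite set $p_x = \{a_0,\ldots,a_{h(x)}\}$, where $h(x)$ is taken so that $|p_x|$ lies above $\varphi$-cut elements, i.e.\ above $\II$ as interpreted by $\tau_{1,\omega_1}$.

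The second task is to check in the ultrapower that $p_x$ names a $\tau_2$-condition: requirement (ii) holds by construction, and (i), namely $|p| > \II$, holds because $h$ dominates every element of the $\omega_1$-closed subcut defined by $\varphi$. We then show $p \not\Vdash_{\tau_2} \neg\xi$ by exhibiting, below any $q \condle p$, a forcing of $\xi$ with the names $\overline X \cap [0,\max q]$ and $\overline x$. For the universal quantifier $\forall\overline y$, given a name $\overline y$ with $r \Vdash \overline y \defd$, we have $\overline y$ below some element of $r$, hence below some $a_i$ with $a_{i+1} \in r$. The witness $\overline z \le a_{i+1}$ is then forced to be defined, and the $\forall \overline u\,\delta$ part is true in the ground model, so it is forced, since atoms are forced by truth.

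The main obstacle is the second task, making the cardinality requirement $|p| > \II$ compatible with the $\omega_1$-shortened cut. The enumeration is indexed by a $\Sigma^0_1$-cut that contains $\I^0_1$ but may not lie above the generated cut in the ultrapower. We expect to resolve this exactly as in Kowalik: in the $\Sigma^0_1$-$\LPC$ case, the generic ultrafilter contains sets on which the length exceeds all of $\I^0_1$. Since the $\varphi$-cut is a subcut of $\I^0_1$, exceeding $\I^0_1$ suffices, so the shortening only makes this step easier. Finally, polynomiality follows because the constructed proof is uniform in $\xi$, with size linear in $|\xi|$ plus fixed lemmas instantiated at $\delta$.
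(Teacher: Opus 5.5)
Your overall plan is what the paper intends by deferring to Kowalik: use the witnesses of $\xi$ to build a $\tau_{1,\omega_1}$-name for a $\tau_2$-condition that forces $\xi$, with length climbing through $\I^0_1$, and note that passing to the $\omega_1$-closed subcut only weakens the requirement $|p|>\II$.

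\textbf{The gap.} Your first task contains a real error, caused by the normal form you chose. An $\exists\Sigma^0_3$ sentence is $\exists\overline X\,\exists\overline x\,\forall\overline y\,\exists\overline z\,\delta$ with $\delta\in\Delta^0_0$. Your extra block $\forall\overline u$ makes $\xi$ an $\exists\Sigma^0_4$ sentence, and with it the step where $a_{i+1}$ ``bounds witnesses $\overline z$ for all $\overline y\le a_i$'' fails, for three reasons.
\begin{enumerate}[(i)]
\item The witness predicate $\forall\overline u\,\delta$ is $\Pi^0_1$, so the construction is a $\Delta^0_2$ recursion, not a $\Sigma^0_1$ search.
\item Its output need not be a set of the ground model, whereas $\tau_1$-names must be total functions in the second-order universe.
\item Even the existence of $a_{i+1}$ is an instance of $\bd\Pi^0_1$, i.e.\ of $\bd\Sigma^0_2$, which $\rcas$ does not prove.
\end{enumerate}
``Absorbing $\forall\overline u$ into the definition of the set'' addresses none of these. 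Likewise, in your second task $\forall\overline u\,\delta$ is not an atom. Forcing it in $\tau_2$ requires $\delta$ at every ultrapower name $u$ that some extension forces to be defined, not just at ground-model elements. Reflection at the $\exists\Sigma^0_4$ level is not to be expected here in any case; compare the paper's remark that $\rcas+\srttt$ is not $\Pi_4$-conservative over $\rcas$.

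\textbf{The fix.} With the correct normal form the obstacle disappears. The Skolem function sending $\overline y$ to the least $\overline z$ with $\delta(\overline X,\overline x,\overline y,\overline z)$ is $\Delta^0_1$ in $\overline X,\overline x$. So take $a_0>\overline x$ and let $a_{i+1}$ be the least number exceeding $2^{a_i}$ and all these witnesses for $\overline y\le a_i$; this number exists by $\bd\Sigma^0_1$, and the whole recursion is $\Delta^0_1$. The set of $i$ for which $a_i$ exists is a $\Sigma^0_1$-cut, so it contains $\I^0_1$.

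What $\Sigma^0_1\text{-}\LPC$ then supplies is not an ultrafilter set ``on which the length exceeds all of $\I^0_1$''. It supplies an unbounded set $A$ of cardinality $\I^0_1$. The function $h(x)=|A\cap[0,x]|$ takes all its values in $\I^0_1$, so $a_{h(x)}$ exists, yet it eventually exceeds every fixed $k\in\I^0_1$. This is exactly what makes every $\tau_{1,\omega_1}$-condition force $|p|>\II$ for the name $p$ given by $x\mapsto\{a_0,\ldots,a_{h(x)}\}$.

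Your second task then goes through, using a suitable finite restriction of $\overline X$ as the second-order witness, with one small correction. For $r\condle p$ forcing $\overline y\defd$, the bound $a_{i+1}$ on the witness need not belong to $r$. Instead, $r$ itself forces the second least element of $r$ that is $\ge\overline y$ (which is $\ge a_{i+1}$) to be defined. This holds because adding two elements to $r\cap[0,\overline y]$ keeps its size in $\II$.
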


\begin{remark}\label{rem:reflection+notlpc}
An analogous result holds with $\Sigma^0_1\text{-}\LPC$
replaced by $\neg\Sigma^0_1\text{-}\LPC$ and $\tau_{1,\omega_1}$
replaced by the interpretation of ${\EA} + {\SC_{\omega_1}}$ 
in $\rcas + \neg \ind \Sigma^0_1 + \neg \Sigma^0_1\text{-}\LPC$ discussed in Lemma \ref{lem:forcing-int-1}.    
\end{remark}

\paragraph{Completing the argument.} 
We can now review how the pieces are put together to complete the proof of the polynomial simulation.

\begin{proof}[Proof of Theorem \ref{thm:non-speedup}]
By \cite[Theorem 2.1]{kwy:ramsey-proof-size} and an
obvious argument by cases, it 
is enough to prove that $\wkls + \srttt$ is polynomially simulated by $\rcas + \neg \ind \Sigma^0_1$ w.r.t.~$\forall \Pi^0_3$. By another case distinction, it suffices
to prove polynomial simulation by $\rcas + \neg \ind \Sigma^0_1 + \Sigma^0_1\text{-}\LPC$ and separately by 
$\rcas + \neg \ind \Sigma^0_1 +\neg\Sigma^0_1\text{-}\LPC$.

Let us consider the case of $\Sigma^0_1\text{-}\LPC$.
Assume that we are given a proof in $\wkls + \srttt$
of a $\forall \Pi^0_3$ sentence $\psi$. 
By Lemmas \ref{lem:forcing-int-1}, \ref{lem:forcing-int-2}
and the properties of forcing interpretations,
we have a polynomial-time procedure that outputs a proof in $\rcas + \neg \ind \Sigma^0_1 + \Sigma^0_1\text{-}\LPC$ of  
\begin{equation*}\label{eqn:forces-psi}
\Vdash_{\tau_{1,\omega_1}} (\Vdash_{\tau_2} \psi).    
\end{equation*}
On the other hand, by Lemma \ref{lem:reflection}, another polytime procedure produces a proof in $\rcas + \neg \ind \Sigma^0_1 + \Sigma^0_1\text{-}\LPC$
of 
\begin{equation*}\label{eqn:forces-not-psi}
\neg \psi \to (\Vdash_{\tau_{1,\omega_1}} (\not \Vdash_{\tau_2} \psi)).    
\end{equation*} 
Putting these two proofs together,
we obtain a proof in $\rcas + \neg \ind \Sigma^0_1 + \Sigma^0_1\text{-}\LPC$ of 
$\neg \psi \to (\Vdash_{\tau_{1,\omega_1}} \bot)$,
which is easily seen to imply $\psi$.

The argument for $\neg \Sigma^0_1\text{-}\LPC$ is similar,
but Lemma \ref{lem:reflection} is replaced by Remark~\ref{rem:reflection+notlpc}.
\end{proof}

\section*{Acknowledgement}

We are grateful to Keita Yokoyama for helpful comments and questions, in particular the ones that inspired Remark \ref{rem:fin-coh}.

This work was supported by grant no.~2023/49/B/ST1/02627 of the National Science Centre, Poland.

\bibliographystyle{plain}
\bibliography{crt-srt}

\end{document}